\documentclass[10pt]{article}

\usepackage{bigints}

\usepackage{amssymb}
\usepackage{mathrsfs}
\usepackage{amsmath}
\usepackage{amsfonts}

\renewcommand{\thefootnote}{\fnsymbol{footnote}}

\usepackage{amsthm}
\newtheorem{theo}{Theorem}[section]
\newtheorem{prop}{Proposition}[section]
\newtheorem{coro}{Corollary}[section]
\newtheorem{lemm}{Lemma}[section]

\newtheorem{main}{Main Theorem}

\theoremstyle{definition}

\theoremstyle{remark}

\newtheorem{rema}{Remark}[section]
\newtheorem{ques}{Question}[section]

\usepackage{hyperref}

\title{Upper bound preservation of the total scalar curvature in a conformal class}
\author{Shota Hamanaka\thanks{The author was supported by Foundation of Research Fellows, The Mathematical Society of Japan.}}
\date{\today}

\begin{document}
\maketitle
 
\renewcommand{\thefootnote}{\fnsymbol{footnote}} 
\footnotetext{\emph{Keywords}: Scalar curvature, Yamabe flow}     
\renewcommand{\thefootnote}{\arabic{footnote}}

\renewcommand{\thefootnote}{\fnsymbol{footnote}} 
\footnotetext{\emph{2020 Mathematics Subject Classification}: 53C21, 53E20}     
\renewcommand{\thefootnote}{\arabic{footnote}}
  
\begin{abstract}
We show that in an arbitrarily fixed conformal class on a closed manifold, the upper bound condition of the total scalar curvature is $C^{0}$-closed if its Yamabe constant is non-positive.
Moreover, if we consider the condition that the scalar curvature is bounded by some fixed continuous function from below in addition to the upper bound of the total scalar curvature, then such a condition is $C^{0}$-closed in an arbitrarily fixed conformal class, whose Yamabe constant is positive.
\end{abstract}

\section{Introduction}
\,\,\,\,\,\,\, Let $M^{n}$ be a smooth manifold of dimension $n \ge 2$ and $\mathcal{M}$ the space of all $C^{2}$-Riemannian metrics on $M.$
For example, the followings are known for the topology of several subspaces of $\mathcal{M}$.
\begin{itemize}
\item Gromov \cite{gromov2014dirac} and Bamler \cite{bamler2016proof} proved that for any continuous function $\sigma \in C^{0}(M),$ the space
\[
\left\{ g \in \mathcal{M}~\middle|~R(g) \ge \sigma \right\}
\]
is closed in $\mathcal{M}$ with respect to the $C^{0}$-topology,
i.e., if a sequence of $C^{2}$-metrics $g_{i}$ converges to a $C^{2}$-metric $g$ in the local $C^{0}$-sense on $M$ that satisfy $R(g_{i}) \ge \sigma,$
then $R(g) \ge \sigma.$
Here, $R(g)$ denotes the scalar curvature of $g$.
\item Lohkamp \cite{lohkamp1995curvature} proved that for any continuous function $\sigma \in C^{0}(M),$ the space
\[
\mathcal{R}(M)_{\le \sigma} := \left\{ g \in \mathcal{M}~\middle|~R(g) \le \sigma \right\}
\]
is dense in $\mathcal{M}$ with respect to the $C^{0}$-topology,
i.e., for any $C^{2}$-metric $g,$ there exists a sequence of $C^{2}$-metrics $(g_{i})$ satisfying $R(g_{i}) \le \sigma$ such that $g_{i}$ converges to $g$ in the locally $C^{0}$-sense on $M$.

This result also implies that, when $M$ is closed (i.e., compact without boundary), for any constant $\kappa \in \mathbb{R},$ the space
\[
\left\{ g \in \mathcal{M}~\middle|~\int_{M} R(g)\, d\mathrm{vol}_{g} \le \kappa \right\}
\]
is dense in $\mathcal{M}$ with respect to the $C^{0}$-topology.
Here, $d\mathrm{vol}_{g}$ denotes the Riemannian volume measure of $g.$
\begin{proof}
  From the above fact by Lohkamp, we can take a sequence of metrics $(g_{i})$ such that
\[
R(g_{i}) \le \frac{\kappa}{2 \mathrm{Vol}(M, g)}~~\mathrm{if}~\kappa \ge 0~~\left(\mathrm{resp.}~R(g_{i}) \le \frac{2 \kappa}{\mathrm{Vol}(M, g)}~~\mathrm{if}~\kappa < 0 \right),
\]
and $g_{i}$ converges to $g$ in the $C^{0}$-sense on $M.$
Integrating both sides with respect to $g_{i}$, we get
\[
\int_{M} R(g_{i})\, d\mathrm{vol}_{g_{i}} \le \frac{\mathrm{Vol}(M, g_{i})}{2 \mathrm{Vol}(M, g)} \kappa~~\mathrm{if}~\kappa \ge 0~~\left( \mathrm{resp.}~\le \frac{2 \mathrm{Vol}(M, g_{i})}{\mathrm{Vol}(M, g)} \kappa~~\mathrm{if}~\kappa < 0 \right),
\]
Since $g \mapsto \mathrm{Vol}(M, g)$ is continuous with respect to $C^{0}$-topology, hence $\mathrm{Vol}(M, g_{i}) / \mathrm{Vol}(M, g) \rightarrow 1$ as $i \rightarrow \infty.$
Therefore there is a sufficiently large $i_{0} \in \mathbb{N}$ such that for all $i \ge i_{0}$, $\int_{M} R(g_{i})\, d\mathrm{vol}_{g_{i}} \le \kappa$, and hence $(g_{i})_{i \ge i_{0}}$ is the desired $C^{0}$-approximation of $g$.
\end{proof}
This Lohkamp's approximation was built from certain $C^{0}$-deformation which is not just a conformal deformation.
Indeed, in the present paper, we will show that in a fixed conformal class, such a subspace is $C^{0}$-closed in some sense (see Corollary \ref{coro1} below).
\item Lohkamp \cite{besson1996riemannian} proved that for any continuous function $\sigma \in C^{0}(M),$ both of the spaces
\[
\left\{ g \in \mathcal{M}~\middle|~\mathrm{Ric}(g) \le \sigma \cdot g \right\}
\]
and
\[
\left\{ g \in \mathcal{M}~\middle|~R(g) \le \sigma \right\}
\]
are closed in $\mathcal{M}$ with respect to $C^{1}$-topology.
\item The author (in his preprint, arXiv:2208.01865v14) proved that when $M$ is closed, for any continuous nonnegative function $\sigma \in C^{0}(M, \mathbb{R}_{\ge 0})$ and constant $\kappa \in \mathbb{R},$ the space
\[
\left\{ g \in \mathcal{M}~\middle|~\int_{M} R(g)\, d\mathrm{vol}_{g} \ge \kappa,~R(g) \ge \sigma \right\}
\]
is closed in $\mathcal{M}$ with respect to the $W^{1, p}~(p > n)$-topology.
\end{itemize}
In the present paper, we will examine the space of all Riemannian metrics whose total scalar curvatures are bounded from \textbf{above} by some constant.
Our first main result is the following.
\begin{main}
\label{main}
Let $M^{n}$ be a closed manifold of dimension $n \ge 3$ and $g_{0}$ a $C^{2}$-Riemannian metric on $M.$
Let $u_{i}, u : M \rightarrow \mathbb{R}_{+}~(i = 1,2, \cdots )$ be positive $C^{2}$-functions on $M$
and assume the following: 
\begin{itemize}
\item[$(1)$] $g_{i} := u_{i}^{\frac{4}{n-2}} g_{0} \overset{C^{0}}{\longrightarrow} g := u^{\frac{4}{n-2}} g_{0}$ on $M,$
\item[$(2)$] there is a constant $\kappa \in \mathbb{R}$ such that $\int_{M} R(g_{i})\, d\mathrm{vol}_{g_{i}} \le \kappa~~\mathrm{for~all}~i.$
\end{itemize}
When $(M, g_{0})$ is Yamabe positive, i.e., $Y(M, g_{0}) > 0$ (see Section \ref{section-proof} for the definition of the Yamabe constant $Y(M, g_{0})$), we additionally assume that 
\begin{itemize}
\item[$(3)$] there is a continuous function  $\delta \in C^{0}(M)$ such that for all $i$, $R(g_{i}) \ge \delta$ on $M.$
\end{itemize}
Then $\int_{M} R(g)\, d\mathrm{vol}_{g} \le \kappa.$
Here, $R(g),$ $d\mathrm{vol}_{g}$ denote respectively the scalar curvature and the Riemannian volume measure of $g.$
Moreover, when $(M, g_{0})$ is Yamabe positive, the same assertion still holds even if $(1)$ is replaced with the following weaker condition:
\begin{itemize}
\item[$(1)'$] $u_{i} \rightarrow u$ in the $L^{\frac{2n}{n-2}}(M, g_{0})$-sense, i.e., 
\[
\int_{M} \left| u_{i} - u \right|^{\frac{2n}{n-2}} d\mathrm{vol}_{g_{0}} \rightarrow 0~~~\mathrm{as}~i \rightarrow \infty.
\]
\end{itemize}
\end{main}
As a corollary of Main Theorem \ref{main} combined with {\cite[p.1118]{gromov2014dirac}}, we obtain the following.
\begin{coro}
\label{coro1}
Let $M^{n}$ be a closed manifold of dimension $n \ge 2$ and $g_{0}$ a $C^{2}$-Riemannian metric on $M.$
Let $\mathcal{M}$ be the space of all $C^{2}$-Riemannian metrics on $M.$
Then, for any continuous function $\sigma \in C^{0}(M)$ and constant $\kappa \in \mathbb{R},$ the space
\[
\left\{ g \in [g_{0}]~\middle|~\int_{M} R(g)\, d\mathrm{vol}_{g} \le \kappa,~R(g) \ge \sigma \right\}~\left( \subset \mathcal{M} \right)
\]
when $Y(M, g_{0}) > 0,$ and the space 
\[
\left\{ g \in [g_{0}]~\middle|~\int_{M} R(g)\, d\mathrm{vol}_{g} \le \kappa \right\}~\left( \subset \mathcal{M} \right)
\]
when $Y(M, g_{0}) \le 0$ are both closed in $\mathcal{M}$ with respect to $C^{0}$-topology.
Here, $[g_{0}] := \{ g = u \cdot g_{0} \in \mathcal{M}~|~u \in C^{2}(M),~u > 0~\mathrm{on}~M \}$ is the ($C^{2}$-) conformal class of $g_{0}.$
\end{coro}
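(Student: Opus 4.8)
The plan is to obtain the statement directly from the Main Theorem together with Gromov's $C^{0}$-closedness of lower scalar-curvature bounds, the only real work being to convert $C^{0}$-convergence of metrics into convergence of conformal factors. Fix $\sigma\in C^{0}(M)$ and $\kappa\in\mathbb{R}$, let $S$ denote the subset of $[g_{0}]$ in question (both constraints if $Y(M,g_{0})>0$, only the integral one if $Y(M,g_{0})\le 0$), and let $(g_{i})\subset S$ converge in $C^{0}$ to some $g\in\mathcal{M}^{2}$; we must show $g\in S$. The case $n=2$ is immediate: by Gauss--Bonnet $\int_{M}R(h)\,d\mathrm{vol}_{h}=4\pi\chi(M)$ for all $h\in\mathcal{M}^{2}$, so the integral constraint is either vacuous or forces $S=\varnothing$, and in either case $S$ is $C^{0}$-closed since $[g_{0}]$ is (by Step 1 below) and $\{h\in\mathcal{M}^{2}:R(h)\ge\sigma\}$ is by Gromov. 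So assume $n\ge 3$.

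\textbf{Step 1 (the limit is conformal, with convergent conformal factors).} Write $g_{i}=u_{i}\,g_{0}$ with $u_{i}>0$; since $g_{i},g_{0}\in\mathcal{M}^{2}$ one has $u_{i}=\tfrac1n\operatorname{tr}_{g_{0}}g_{i}=\tfrac1n\,g_{0}^{jk}(g_{i})_{jk}\in C^{2}$. As $g_{i}\to g$ uniformly and $g_{0}$ is a fixed $C^{2}$ tensor, $u_{i}\to u:=\tfrac1n\operatorname{tr}_{g_{0}}g$ uniformly, with $u\in C^{2}$ and $u>0$ by positivity of $g$; moreover $u_{i}g_{0}\to u\,g_{0}$ uniformly, so $g=u\,g_{0}\in[g_{0}]$. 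Put $v_{i}:=u_{i}^{(n-2)/4}$ and $v:=u^{(n-2)/4}$: these are positive $C^{2}$ functions, $g_{i}=v_{i}^{4/(n-2)}g_{0}$, $g=v^{4/(n-2)}g_{0}$, and $v_{i}\to v$ in $C^{0}(M)$, hence also in $L^{2n/(n-2)}(M,g_{0})$ since $M$ is compact. Thus hypothesis $(1)$ of the Main Theorem --- a fortiori $(1)'$ --- holds for the data $(g_{0},v_{i},v)$.

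\textbf{Step 2 (apply the Main Theorem and Gromov).} Hypothesis $(2)$ holds since $g_{i}\in S$ gives $\int_{M}R(g_{i})\,d\mathrm{vol}_{g_{i}}\le\kappa$; and when $Y(M,g_{0})>0$, hypothesis $(3)$ holds with $\delta:=\sigma$ because $R(g_{i})\ge\sigma$ for every $i$. Hence the Main Theorem yields $\int_{M}R(g)\,d\mathrm{vol}_{g}\le\kappa$. If $Y(M,g_{0})\le 0$ this is precisely the defining condition of $S$, so $g\in S$. If $Y(M,g_{0})>0$, it remains to see that $R(g)\ge\sigma$: since $g_{i}\to g$ in $C^{0}$ within $\mathcal{M}^{2}$ with $R(g_{i})\ge\sigma$ for all $i$, Gromov's theorem (\cite[p.1118]{gromov2014dirac}; cf. the first bullet of the Introduction) gives $R(g)\ge\sigma$, so again $g\in S$. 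Therefore $S$ is $C^{0}$-closed in $\mathcal{M}^{2}$.

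\textbf{Main obstacle.} Granting the Main Theorem there is essentially no obstacle: the Main Theorem is where the difficulty lies, since $h\mapsto\int_{M}R(h)\,d\mathrm{vol}_{h}$ is not $C^{0}$-continuous --- by Lohkamp's theorem it can even be decreased by arbitrarily $C^{0}$-small perturbations --- so the persistence of the upper bound $\le\kappa$ under $C^{0}$-limits inside a conformal class is genuine content, not a formality. Within the reduction the two mildly delicate points are: in Step 1, ensuring that a $C^{0}$-limit of conformally related metrics is again conformally related with conformal factors converging strongly enough to feed the Main Theorem (the trace identity makes this routine); and the fact that in the Yamabe-positive case the Main Theorem returns only the integral bound, so the pointwise bound $R(g)\ge\sigma$ has to be recovered separately --- which is exactly what Gromov's $C^{0}$-closedness supplies.
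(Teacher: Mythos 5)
Your argument is correct and is precisely the derivation the paper intends: the paper offers no separate proof of this corollary beyond asserting that it follows from the Main Theorem together with Gromov's $C^{0}$-closedness of $\{h : R(h)\ge\sigma\}$ (and Gauss--Bonnet when $n=2$), and your Steps 1--2 supply exactly those details, including the trace identity that recovers the conformal factors and the substitution $v_{i}=u_{i}^{(n-2)/4}$ needed to invoke hypothesis $(1)$. The only loose end is that your Step 1 yields a limiting conformal factor that is merely $C^{2}$, while the paper's definition of $[g_{0}]$ requires $u\in C^{3,\alpha}$; this mismatch is an artifact of the corollary's statement rather than of your reduction, and is worth a remark but not a repair.
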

When $n = 2,$ the above corollary is a direct conclusion from the Gauss-Bonnet Theorem and {\cite[p.1118]{gromov2014dirac}}.
In the case of $\kappa \le 0$ (in particular, $Y(M, g_{0}) \le 0$), we can refine Main Theorem \ref{main} to the following one with a slightly weaker assumption. 
\begin{main}
\label{main2}
Let $M^{n}$ be a closed manifold of dimension $n \ge 3$ and $g_{0}$ a $C^{2}$-Riemannian metric on $M.$
Let $u_{i}, u : M \rightarrow \mathbb{R}_{+}~(i = 1,2, \cdots )$ be positive $C^{2}$-functions on $M$
and set $g_{i} := u_{i}^{\frac{4}{n-2}} g_{0}$ and $g := u^{\frac{4}{n-2}} g_{0}.$
Assume the following:
\begin{itemize}
\item[$(a)$] there is a positive constant $C_{0}$ such that $C_{0}^{-1} \le u_{i} \le C_{0}$ on $M$ for all $i,$
\item[$(b)$] $u_{i} \rightarrow u$ in the $L^{1}(M, g_{0})$-sense, i.e., 
\[
\int_{M} |u_{i} - u|\, d\mathrm{vol}_{g_{0}} \rightarrow 0~~~\mathrm{as}~i \rightarrow \infty,
\]
\item[$(c)$] there is a non-positive constant $\kappa \in \mathbb{R}$ such that $\int_{M} R(g_{i})\, d\mathrm{vol}_{g_{i}} \le \kappa \le 0$ for all $i$.
In particular, $g_{0}$ is Yamabe non-positive (i.e., $Y(M, g_{0}) \le 0$) from this condition.
\end{itemize}
Then $\int_{M} R(g)\, d\mathrm{vol}_{g} \le \kappa.$
\end{main}

This paper is organized as follows.
In Section 2, we prove Main Theorem \ref{main}. 
The key fact to prove Main Theorem \ref{main} is that the normalized Yamabe flow starting from each $g_{i}$ subconverges to the normalized Yamabe flow starting from the limiting metric $g.$
This has been proven separately in two cases: Yamabe non-positive (i.e., $Y(M, g_{0}) \le 0$) and positive (i.e., $Y(M, g_{0}) > 0$).
In Section 3, we prove Main Theorem \ref{main2}.
In the setting of Main Theorem \ref{main2}, we observe the unnormalized Yamabe flow starting from each $g_{i}.$
We prepare an $L^{1}$-estimate for such flows and use it to prove that $g_{i}$ actually converges to $g$ in the $C^{0}$-sense on $M$ as $i \rightarrow \infty.$
Then Main Theorem \ref{main2} follows from Main Theorem \ref{main}. 
In Section 4 (Appendix), we show an auxiliary fact has been used in the proof of Main Theorem \ref{main}.

\subsection*{Acknowledgements}
\,\,\,\,\,\,\, This work was supported by Foundation of Research Fellows, The Mathematical Society of Japan.
The author was supported by Mitsubishi Electric Corporation Advanced Technology R\&D Center.
The author would like to thank J{\o}rgen Olsen Lye for notifying him of their paper \cite{carron2021convergence}.

\section{Proof of Main Theorem \ref{main}}
\label{section-proof}

\,\,\,\,\,\,\, Let $M$ be a closed (i.e., compact without boundary) manifold of dimension $n \ge 3$
and let $g$ be a Riemannian metric on $M.$
We will denote the Riemannian volume measure of $(M, g)$ as $d\mathrm{vol}_{g}.$
We consider the (volume) normalized Yamabe flow:
\begin{equation}\label{eq-normalized-yamabe}
  \frac{\partial}{\partial t} g(t) = - \left( R(g(t)) - r(g(t)) \right)\, g(t),
\end{equation}
where $R(g(t))$ is the scalar curvature of $g(t)$ and $r(g(t))$ is the mean value of $R(g(t)),$ i.e., 
\[
r(g(t)) = \frac{\int_{M} R(g(t))\, d\mathrm{vol}_{g(t)}}{\int_{M} d\mathrm{vol}_{g(t)}}.
\]
Since 
\[
\begin{split}
\frac{d}{dt} \mathrm{Vol}(M, g(t)) &= \int_{M} \frac{1}{2} \mathrm{tr}_{g(t)} \left( \frac{\partial}{\partial t} g(t) \right)\, d\mathrm{vol}_{g(t)} \\
&= -\frac{n}{2} \int_{M} \left( R(g(t)) - r(g(t)) \right)\, d\mathrm{vol}_{g(t)}
= 0,
\end{split}
\]
the volume $\mathrm{Vol}(M, g(t))$ is invariant along the normalized Yamabe flow.
The Yamabe constant of a Riemannian metric $g_{0}$ is defined as the infimum of the normalized Einstein-Hilbert functional among all metrics conformally equivalent to $g_{0},$ i.e., 
\[
Y(M, g_{0}) := \inf \left\{ \frac{\int_{M} R(g)\, d\mathrm{vol}_{g}}{\left( \int_{M} d\mathrm{vol}_{g} \right)^{\frac{n-2}{n}}}~\middle|~g = u^{\frac{4}{n-2}} g_{0},~u \in C^{\infty}(M),~u > 0~\mathrm{on}~M \right\}.
\]
By the definition, $Y(M, g_{0})$ depends only on the conformal class $[g_{0}]$ of $g_{0}.$

Since the normalized Yamabe flow preserves the conformal structure, we may write $g(t) = u(t)^{\frac{4}{n-2}} g_{0},$ where $g_{0}$ is a fixed background metric on $M$ and $u(t)$ is a positive function.
The scalar curvature of $g(t)$ is related to the scalar curvature of $g_{0}$ by
\[
R(g(t)) = - u^{-\frac{n+2}{n-2}} \left( \frac{4(n-1)}{n-2} \Delta_{g_{0}} u - R(g_{0}) u \right).
\]
Hence, the normalized Yamabe flow reduces to the following evolution equation for the conformal factor can be written as 
\[
\frac{\partial}{\partial t} u(t) = \frac{n+2}{4} \left( \frac{4(n-1)}{n-2}\Delta_{g_{0}}u(t)^{\frac{n-2}{n+2}} - R(g_{0}) u(t)^{\frac{n-2}{n+2}} + r(g(t)) u(t) \right).
\]
Using the identity,
\[
\frac{\partial}{\partial t} R(g(t)) = (n-1) \Delta_{g(t)} R(g(t)) + R(g(t)) \left( R(g(t)) - r(g(t)) \right), 
\]
we obtain that
\[
\frac{d}{dt} r(g(t)) = -\frac{n-2}{2} \mathrm{Vol}(M, g(t))^{-1} \int_{M} \left( R(g(t)) -r(g(t)) \right)^{2}\, d\mathrm{vol}_{g(t)}.
\]
In particular, the function $t \mapsto r(g(t))$ is decreasing.

It is well-known that for a given positive function $u_{0}$ on $M$ with sufficiently high regularity, the normalized Yamabe flow always has a unique positive smooth solution $u(t)$ up to a positive time $T > 0$ with initial data $u_{0}.$
We use the following version by Carron, Lye and Vertman \cite{carron2021convergence}.
\begin{prop}[{\cite[Theorem 2.7, Theorem 2.9]{carron2021convergence}}]
\label{prop-short}
Let $M^{n}$ be a closed manifold of dimension $n \ge 3$ and $p > n/2.$
Let $g_{0}$ be a $C^{2}$-Riemannian metric on $M$ and
let $u_{0}$ be a positive $C^{2}$-function.
Then there is a positive time $T > 0$ such that a unique positive solution $u(t)$ of the normalized Yamabe flow (\ref{eq-normalized-yamabe}) on $M \times [0, T)$, and   the solution $u(t)$ is smooth on $M \times (0, T)$.
Moreover, $u(t) \rightarrow u_{0}$ as $t \rightarrow 0$ in the $C^{0}$-sense
and 
$R(g(t)) = R ( u(t)^{\frac{4}{n-2}} g_{0} )$ converges to $R( u_{0}^{\frac{4}{n-2}} g_{0} )$ as $t \rightarrow 0$
in the $C^{0}$-sense.
\end{prop}
\begin{proof}
Since $(M, g_{0})$ is a closed smooth Riemannian manifold, the assumption of {\cite[Theorem 2.7]{carron2021convergence}} is satisfied.
Hence the assertion follows from {\cite[Theorem 2.9]{carron2021convergence}}.
Note that Assumption 1 in \cite{carron2021convergence} is used to gain the Sobolev inequality.
However, in our setting, the manifold $(M, g_{0})$ is smooth and closed, hence the Sobolev inequality holds regardless of the sign of the Yamabe constant.
As a result, we can obtain the unique short-time solution of (\ref{eq-normalized-yamabe}) for a closed smooth Riemannian manifold $(M, g_{0})$ with $Y(M, g_{0}) \le 0$ as well.
Finally, the last assertion follows from {\cite[The annotation 6 in Theorem 2.9]{carron2021convergence}}.
\end{proof}
\begin{rema}
\label{rema-prop2.1}
Since 
$u(t) \rightarrow u_{0}$ and 
$R(g(t)) \rightarrow R \left( u_{0}^{\frac{4}{n-2}} g_{0} \right)$ in the $C^{0}$-sense as $t \rightarrow 0$, 
from the mean value theorem, we still have that 
$t \mapsto r(g(t))$ is decreasing on $[0, T)$.
\end{rema}
The existence of a long-time solution were settled by Hamilton (in his unpublished manuscript), Chow \cite{chow1992yamabe}, Schwetlick-Struwe \cite{schwetlick2003convergence} and Brendle \cite{brendle2005convergence}.
\begin{theo}[\cite{chow1992yamabe, schwetlick2003convergence, brendle2005convergence},~cf. {\cite[Proof of Proposition 3.6]{carron2021convergence}}]
Let $M, g_{0}$ and $u_{0}$ be the same as those in the previous proposition.
Then the positive solution $u(t)$ as in Proposition \ref{prop-short} exists for all time (i.e., $T = \infty$).
\end{theo}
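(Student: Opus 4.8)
The plan is a standard continuation argument built on the short-time existence statement, Proposition~\ref{prop-short}. Let $T \in (0, \infty]$ be the supremum of times for which a solution $u(t)$ of the normalized Yamabe flow with the given initial data $u_{0}$ exists (so $u$ is smooth on $M \times (0, T)$ and $u(t) \to u_{0}$ in $W^{2,p}(M,g_0)$, $p > n/2$, as $t \to 0$), and suppose for contradiction that $T < \infty$. The whole task is then to establish uniform a priori bounds on $M \times [0, T)$: once we know that $u(t)$, and hence $R(g(t))$, stay uniformly bounded above and below away from zero on $[0, T)$, interior parabolic estimates — after noting that the conformal relation $\tfrac{4(n-1)}{n-2}\Delta_{g_{0}} u = R(g_{0}) u - u^{\frac{n+2}{n-2}} R(g(t))$ already puts $\Delta_{g_{0}} u$ in $L^{\infty}$, whence $u(t)$ is bounded in every $W^{2,q}$ — upgrade this to uniform $C^{\infty}$ control of $u(t)$ on each time slab $[\tau, T)$ with $\tau > 0$. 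Since the existence interval furnished by Proposition~\ref{prop-short} depends only on the $W^{2,p}$-norm of the initial datum, and $u(t_{0})$ is smooth and uniformly $W^{2,p}$-bounded for $t_{0} \in (0,T)$, restarting the flow from $u(t_{0})$ for $t_{0}$ close enough to $T$ extends the solution beyond $T$, contradicting the maximality of $T$.

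Some of these bounds come cheaply. The volume $\mathrm{Vol}(M, g(t))$ is constant, and (as computed above) $t \mapsto r(g(t))$ is non-increasing, so $r(g(t)) \le r(g(0))$; on the other hand $r(g(t))\,\mathrm{Vol}(M, g(t))^{\frac{2}{n}}$ is the Yamabe energy of $g(t)$, hence $\ge Y(M, g_{0})$, so $r(g(t))$ is two-sidedly bounded on $[0, T)$. Feeding this into the evolution equation $\partial_{t} R(g(t)) = (n-1)\Delta_{g(t)} R(g(t)) + R(g(t))\big(R(g(t)) - r(g(t))\big)$ and comparing the spatial minimum of $R$ with the scalar ODE $\dot{\varphi} = \varphi(\varphi - r)$ yields a uniform \emph{lower} bound $R(g(t)) \ge -C$ on $[0, T)$. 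Because $\partial_{t}\log u(t) = -\tfrac{n-2}{4}\big(R(g(t)) - r(g(t))\big)$, integrating this lower bound on $R$ gives a uniform \emph{upper} bound on $u(t)$, and, symmetrically, a uniform \emph{upper} bound on $R$ would give a uniform positive \emph{lower} bound on $u(t)$. So everything hinges on one estimate: an upper bound on $R(g(t))$ on finite time intervals.

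This last estimate is the real obstacle, and here the maximum principle is of no use: at a spatial maximum one only gets $\tfrac{d}{dt}\max_{M} R \le \max_{M} R\,(\max_{M} R - r)$, a Riccati-type inequality consistent with finite-time blow-up. The bound must come from a more global, conformally geometric argument, and it is precisely at this point that the sign of the Yamabe constant matters and the cited work enters. When $Y(M, g_{0}) \le 0$ the flow is in the non-concentrating regime: a further maximum-principle/barrier argument (going back to Hamilton and Chow) — most transparently after normalizing the background within $[g_{0}]$ so that $R(g_{0}) \le 0$ — bounds $\max_{M} R(g(t))$ in terms of the initial data, which closes the argument. When $Y(M, g_{0}) > 0$ one must rule out finite-time bubbling: if $\sup_{M} R(g(t)) \to \infty$ as $t \uparrow T < \infty$, a parabolic rescaling about a concentration point extracts a nonconstant entire solution of the Yamabe equation on $\mathbb{R}^{n}$ (a standard bubble), carrying at least the Yamabe energy $Y(S^{n})$ of the round sphere, which is incompatible with the monotonicity and boundedness of the Yamabe energy $r(g(t))\,\mathrm{Vol}(M, g(t))^{\frac{2}{n}}$ along the flow — making this rigorous being exactly the content of the concentration-compactness and test-function estimates of Schwetlick--Struwe (for $3 \le n \le 5$) and Brendle (in general dimensions, via the positive mass theorem). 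Once $\sup_{M} R(g(t))$ is controlled, the continuation argument concludes as in the Yamabe nonpositive case.
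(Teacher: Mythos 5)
The paper offers no proof of this theorem: it is quoted from the literature (Hamilton, Chow, Schwetlick--Struwe, Brendle; cf.\ Carron--Lye--Vertman), so your attempt can only be measured against those sources. Your continuation framework and the ``cheap'' bounds are fine: volume preservation, monotonicity and two-sided boundedness of $r(g(t))$, the preserved lower bound $R(g(t)) \ge \min\{\inf_M R(g(0)),0\}$ from the maximum principle, and the resulting upper bound on $u(t)$ on finite time intervals. The genuine gap is in the step you yourself flag as the crux. You route the positive lower bound on $u(t)$ through a pointwise upper bound on $R(g(t))$, and you propose to obtain that upper bound by ruling out finite-time bubbling via the concentration-compactness of Schwetlick--Struwe and Brendle. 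That is not what those papers do to get long-time existence, and the contradiction you sketch does not close: a bubble carrying energy at least $Y(S^{n})$ is perfectly compatible with a monotone, bounded Yamabe energy $r(g(t))\,\mathrm{Vol}(M,g(t))^{2/n}$ when the initial energy is arbitrary --- that is precisely why Brendle needs the positive mass theorem, and only for the asymptotic $t\to\infty$ analysis, not to prevent finite-time blow-up.

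The cited arguments avoid any a priori upper bound on $R$. The positive lower bound on $u$ is obtained directly: in the Yamabe nonpositive case from Ye's maximum-principle inequalities for $u^{\frac{n+2}{n-2}}$ after conformally normalizing the background so that $R(g_{0})\le 0$ (the inequalities (3.1) and (3.3) of Ye that this paper quotes later), and in the Yamabe positive case from Brendle's Proposition 2.4: the elliptic relation $-\tfrac{4(n-1)}{n-2}\Delta_{g_{0}}u + R(g_{0})u = R(g(t))\,u^{\frac{n+2}{n-2}}$, the preserved lower bound on $R$ and the upper bound on $u$ control $\Delta_{g_{0}}u$ from above, and elliptic estimates combined with the preserved volume $\int_{M}u^{\frac{2n}{n-2}}\,d\mathrm{vol}_{g_{0}}$ force $\inf_{M}u \ge c > 0$. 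Once $u$ is pinched between positive constants the evolution equation for the conformal factor is uniformly parabolic, Krylov--Safonov and Schauder give uniform $C^{2+\alpha}$ (hence all higher) bounds on each $[\tau,T)$, and the upper bound on $R$ comes out as a consequence of this regularity rather than going in as a hypothesis; your restarting argument then concludes. As written, your proof leaves the decisive estimate resting on a reduction that is both misattributed and logically incomplete.
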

Therefore, in the setting of Main Theorem \ref{main}, we can take a uniform positive time $T > 0$ such that
the unique positive solutions $u(t), u_{i}(t)$ of the normalized Yamabe flow (\ref{eq-normalized-yamabe}) with the initial functions $u, u_{i}$ respectively
exists up to $T.$
Hereafter, we fix such a positive existence time $T > 0$ (independent of $i$). 

From the assumption $(1)$ in Main Theorem \ref{main}, by taking $i$ large enough, we can assume that 
\[
\frac{1}{2}\, \mathrm{Vol}(M, g) \le \mathrm{Vol}(M, g_{i}) \le 2\, \mathrm{Vol}(M, g)~~\mathrm{for~all}~i,
\]
where $\mathrm{Vol}(M, h)$ is the volume of $M$ measured by the Riemannian metric $h.$
Moreover, since the normalized Yamabe flow preserves the volume, we obtain that
\[
\frac{1}{2}\, \mathrm{Vol}(M, g) \le \mathrm{Vol}(M, g_{i}(t)) \le 2\, \mathrm{Vol}(M, g)
\]
for all $i$ and $t \in [0, T]$ as well.
\begin{rema}
\label{rema-volume}
Since $g_{i} = u_{i}^{\frac{4}{n-2}} g_{0}$ and $g = u^{\frac{4}{n-2}} g_{0}$ respectively, we have respectively
\[
\mathrm{Vol}(M, g_{i}) = \int_{M} u_{i}^{\frac{2n}{n-2}}\, d\mathrm{vol}_{g_{0}}~\mathrm{and}~
\mathrm{Vol}(M, g) = \int_{M} u^{\frac{2n}{n-2}}\, d\mathrm{vol}_{g_{0}}.
\]
Hence the above volume estimates also hold under the weaker assumption $(1)'$ in Main Theorem \ref{main}.
\end{rema}
We will establish the convergence of $u_{i}(t)$ to $u(t)$ on $M \times [0, T]$ below.
We shall do this in two cases: $Y(M, g_{0}) \le 0$ and $Y(M, g_{0}) > 0.$

\medskip
\noindent
\underline{\textbf{Yamabe non-positive case (i.e., $Y(M, g_{0}) \le 0$)}}:
As stated in {\cite[Section~3]{ye1994global}}, since $Y(M, g_{0}) < 0~(\mathrm{resp.}~\le 0),$
we can choose a background metric $\tilde{g}_{0} \in [g_{0}]$ such that $R(\tilde{g}_{0}) < 0~(\mathrm{resp.}~\le 0)$ on $M.$
Set $\tilde{g}_{0} =: \tilde{u}^{-\frac{4}{n-2}} g_{0},$
then $u \cdot \tilde{u}$ and $u_{i} \cdot \tilde{u}$ satisfy the assumption $(1)$ in Main Theorem \ref{main}.
Thus, without loss of generality, we can assume that $R(g_{0}) < 0~(\mathrm{resp.}~\le 0)$ on $M.$
From the maximum principle {\cite[(3.1)]{ye1994global}}, we obtain that for all $t \in (0, T),$
\[
\frac{d}{dt} u_{i, \mathrm{min}}^{\frac{n+2}{n-2}}(t) \ge \frac{n-2}{4(n-1)} \min_{M} \left| R(g_{0}) \right|\, u_{i, \mathrm{min}}(t) + \frac{n-2}{4(n-1)} r(g_{i}(t))\, u_{i, \mathrm{min}}^{\frac{n+2}{n-2}}(t),
\] 
where $u_{i, \mathrm{min}}(t) := \min_{M} u_{i}(\cdot, t).$
By the definition of $Y(M, g_{0})$ and the above volume estimate (see also Remark \ref{rema-volume}), we have
$r(g_{i}(t)) \ge Y(M, g_{0}) \mathrm{Vol}(M, g_{i}(t))^{-\frac{2}{n}} \ge \frac{1}{2} Y(M, g_{0}) \mathrm{Vol}(M, g)^{-\frac{2}{n}}.$
Thus we get
\[
\frac{d}{dt} u_{i, \mathrm{min}}^{\frac{n+2}{n-2}}(t) \ge \frac{n-2}{8(n-1)} Y(M, g_{0}) \mathrm{Vol}(M, g)^{-\frac{2}{n}}\, u_{i, \mathrm{min}}^{\frac{n+2}{n-2}}(t)
\]
for all $t \in (0,T).$
Hence, from this estimate and the mean value theorem (cf. Remark \ref{rema-prop2.1}), we obtain that
\[
u_{i, \mathrm{min}}^{\frac{n+2}{n-2}}(t) \ge \exp \left( \frac{n-2}{8(n-1)} Y(M, g_{0}) \mathrm{Vol}(M, g)^{-\frac{2}{n}}\, t \right) \cdot u_{i, \mathrm{min}}^{\frac{n+2}{n-2}}(0)
\]
for all $t \in [0, T].$
On the other hand, the maximum principle {\cite[(3.3)]{ye1994global}} implies that
\[
\frac{d}{dt} u_{i, \mathrm{max}}^{\frac{n+2}{n-2}}(t) \le -\frac{n-2}{4(n-1)} \left( \min_{M} R(g_{0}) \right)\, u_{i, \mathrm{max}}(t) + \frac{n-2}{4(n-1)} r(g_{i}(t))\, u_{i, \mathrm{max}}^{\frac{n+2}{n-2}}(t),
\]
where $u_{i, \mathrm{max}} (t) := \max_{M} u_{i}(\cdot, t).$
This implies that
\[
\frac{d}{dt} u_{i, \mathrm{max}}^{\frac{4}{n-2}} \le - \frac{n-2}{(n-1)(n+2)} \left( \min_{M} R(g_{0}) \right) + \frac{n-2}{(n-1)(n+2)} r(g_{i}(t)) u^{\frac{4}{n-2}}_{i, \mathrm{max}}(t).
\]
Moreover, since $t \mapsto r(g_{i}(t))$ is decreasing, we have
\[
\frac{d}{dt} u_{i, \mathrm{max}}^{\frac{4}{n-2}} \le - \frac{n-2}{(n-1)(n+2)} \left( \min_{M} R(g_{0}) \right) + \frac{n-2}{(n-1)(n+2)} r(g_{i}(0)) u^{\frac{4}{n-2}}_{i, \mathrm{max}}(t).
\]
By Gronwall's inequality (Proposition \ref{prop-app} in Section \ref{appendix}), we obtain that
\[
u_{i, \mathrm{max}}^{\frac{4}{n-2}}(t) \le \alpha(t) + \int^{t}_{0} \alpha(s) \beta\,\exp \left( \beta (t-s) \right)\, ds~~\mathrm{for~all}~t \in [0,T],
\]
where
\[
\alpha(t) := u_{i, \mathrm{max}}(0)^{\frac{4}{n-2}} - \frac{n-2}{(n-1)(n+2)} \left( \min_{M} R(g_{0}) \right) t
\]
and
\[
\beta := \frac{n-2}{(n-1)(n+2)} \max \left\{ r(g_{i}(0)), 0 \right\}.
\]
From the assumption $(2)$ in Main Theorem \ref{main} and the lower volume estimate (Remark \ref{rema-volume}),
\[
\beta \le \frac{2(n-2)}{(n-1)(n+2)} \mathrm{Vol}(M, g)^{-1} \cdot \max \{ \kappa, 0 \} := \tilde{\beta}.
\]
Hence we obtain that
\[
u_{i, \mathrm{max}}^{\frac{4}{n-2}}(t) \le \alpha(t) + \int^{t}_{0} \alpha(s) \tilde{\beta}\, \exp \left( \tilde{\beta} (t-s) \right)\, ds~~\mathrm{for~all}~t \in [0,T].
\] 
Therefore there is a positive constant $C$ depending only on $\kappa, u, g_{0}$ and $M$
such that
\[
0 < C^{-1} \le u_{i}(x, t) \le C~~\mathrm{for~all}~(x, t) \in M \times [0,T]. 
\]
Unless otherwise stated, we will denote positive constants depending only on the given data ($\kappa, u, g_{0}$ and $M$) as the same symbol $C.$
Then, from Krylov-Safonov estimate (\cite{krylov1981certain}, see also {\cite[Theorem~12]{picard2019notes}} and {\cite[Proposition~4.2]{caldeira2021normalized}}), we obtain the following $C^{\alpha}$-estimate for some $\alpha \in (0, 1):$
\[
|| u_{i}(\cdot, \cdot) ||_{C^{\alpha/2, \alpha} \left (M \times [t_{0}, T] \right)} \le C~~\mathrm{for~all}~t_{0} \in (0,T)~\mathrm{and}~i.
\]
Since $u_{i}(t) \rightarrow u_{i}(0)$ in $W^{2,p}(M, g_{0})$ for all $p > n/2,$
by the Sobolev embedding: $W^{2,p} \hookrightarrow C^{0, \alpha}$ for sufficiently large $p >> n/2,$
we obtain that
$u_{i}(t) \rightarrow u_{i}(0)$ in $C^{0, \alpha}(M, g_{0}).$
Hence, as $t_{0} \rightarrow 0,$ we obtain that
\[
|| u_{i}(\cdot, \cdot) ||_{C^{\alpha/2, \alpha} \left (M \times [0, T] \right)} \le C.
\]
From this estimate, the regularity assumption of Main Theorem \ref{main} and the interior parabolic Schauder estimate ({\cite[Proposition~4.2.]{caldeira2021normalized}}), we obtain that
\[
|| u_{i}(\cdot, \cdot) ||_{C^{(2+\alpha')/2, 2 + \alpha'} \left( M \times [t, T] \right)} \le C~~\mathrm{for~all}~i~\mathrm{and}~t \in (0, T),
\]
where $\alpha' = \frac{n-2}{n+2} \alpha.$
Note here that we have used the fact that $t \mapsto r(g(t))$ is decreasing and the lower volume estimate (Remark \ref{rema-volume}) when we applied the Schauder estimate. 
From these estimate, we can choose a subsequence $(u_{i_{k}}(\cdot, t))_{t \in [0, T]}$ such that 
$u_{i_{k}}$ converges to some $C^{2}$ function $\tilde{u}$ on $M \times (0,T]$ in the locally uniformly $C^{2}$-sense in $M \times (0,T]$ and $\tilde{u}$ satisfies the normalized Yamabe flow equation (\ref{eq-normalized-yamabe}).
Moreover, from the above $C^{\alpha}$-estimate and the standard diagonal argument, we can choose the subsequence $u_{i_{k}}$ so that the limiting function $\tilde{u}$ is a $C^{2}$ function in $M \times (0,T]$ and a $C^{\beta}$ function on $M \times [0,T]$ for some $\beta < \alpha$, and $u_{i_{k}}$ converges to $\tilde{u}$ in the $C^{\beta}$-sense on $M \times [0,T]$.
In particular, since $u_{i_{k}}(\cdot, 0) \rightarrow u(\cdot)$ as $k \rightarrow \infty$
(from the assumption $(1)$ in Main Theorem \ref{main}),
$\tilde{u}(\cdot, 0) \equiv u(\cdot).$
Then, from the uniqueness of the normalized Yamabe flow ({\cite[Corollary~3.3]{caldeira2021normalized}}), we finally obtain that
$\tilde{u}(t) \equiv u(t)$ for all $t \in [0,T].$ 
\begin{rema}\label{rema-uniqueness}
  Our situation here and the one in {\cite[Corollary~3.3]{caldeira2021normalized}} are different.
  However, one can still apply the same arguments in {\cite[Corollary~3.3]{caldeira2021normalized}} to our setting and can obtain the uniqueness since $\tilde{u}(\cdot, 0) \equiv u(\cdot)$ and $\tilde{u},~u$ both satisfy the unnormalized Yamabe flow equation (\ref{eq-normalized-yamabe}) in $M \times (0, T]$. 
\end{rema}

\medskip
\noindent
\underline{\textbf{Yamabe positive case (i.e., $Y(M, g_{0}) > 0$)}}:
As done in the previous case, we can assume that $R(g_{0}) > 0$ without loss of generality.
Since the assumption $(1)'$ of Main Theorem \ref{main} is weaker than $(1),$
it is enough to prove the theorem under the assumption $(1)'.$ 
Then we can firstly obtain the following.
\begin{prop}[{\cite[Proposition~2.1.]{brendle2005convergence}}]
\label{prop-pres}
The scalar curvature of the metric $g_{i}(t)$ satisfies
\[
\inf_{M} R(g_{i}(t)) \ge \min \left\{ \delta, 0 \right\}~~\mathrm{for~all}~t \ge 0.
\]
\end{prop}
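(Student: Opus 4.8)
The plan is to run a parabolic minimum principle on the scalar curvature along the normalized Yamabe flow $g_{i}(t)$, exploiting the elementary fact that the mean value $r(g_{i}(t))$ always dominates the pointwise minimum of $R(g_{i}(t))$.

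First I would recall that $g_{i}(t) = u_{i}(t)^{\frac{4}{n-2}} g_{0}$ is smooth on $M \times (0,T)$ and that $R(g_{i}(t))$ satisfies the reaction--diffusion equation
\[
\frac{\partial}{\partial t} R(g_{i}(t)) = (n-1)\,\Delta_{g_{i}(t)} R(g_{i}(t)) + R(g_{i}(t))\bigl( R(g_{i}(t)) - r(g_{i}(t)) \bigr)
\]
displayed above, together with the fact (Proposition~\ref{prop-short} and the subsequent remark) that $R(g_{i}(t)) \to R(g_{i})$ in $C^{0}(M)$ as $t \to 0$. Set $\rho_{i}(t) := \min_{M} R(g_{i}(\cdot, t))$; this is a Lipschitz function of $t$ on $[0,T]$. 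By Hamilton's trick, for a.e.\ $t \in (0,T)$ the derivative $\rho_{i}'(t)$ exists and equals the right-hand side of the evolution equation evaluated at a point $x_{t}$ where $R(g_{i}(\cdot, t))$ attains its minimum; since $\Delta_{g_{i}(t)} R(g_{i}(t))(x_{t}) \ge 0$ there, this gives
\[
\rho_{i}'(t) \ge \rho_{i}(t)\bigl( \rho_{i}(t) - r(g_{i}(t)) \bigr) \qquad \text{for a.e. } t \in (0,T).
\]

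The key observation is the sign of the right-hand side: since $r(g_{i}(t))$ is the $g_{i}(t)$-average of $R(g_{i}(t))$, we always have $r(g_{i}(t)) \ge \rho_{i}(t)$, hence $\rho_{i}(t)\bigl(\rho_{i}(t) - r(g_{i}(t))\bigr) \ge 0$ whenever $\rho_{i}(t) \le 0$, so $\rho_{i}' \ge 0$ a.e.\ on $\{\, t : \rho_{i}(t) \le 0 \,\}$. Now put $\underline{\delta} := \min_{M} \delta$ and $c := \min\{\underline{\delta}, 0\} \le 0$; assumption~$(3)$ gives $\rho_{i}(0) = \min_{M} R(g_{i}) \ge \underline{\delta} \ge c$. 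Suppose toward a contradiction that $\rho_{i}(t_{1}) < c$ for some $t_{1} > 0$, and let $t_{0} := \sup\{\, t \in [0, t_{1}] : \rho_{i}(t) \ge c \,\}$, which is well defined since $\rho_{i}(0) \ge c$. Then $\rho_{i}(t) < c \le 0$ for every $t \in (t_{0}, t_{1}]$, so $\rho_{i}' \ge 0$ a.e.\ on $(t_{0}, t_{1})$; integrating and using continuity of $\rho_{i}$ at $t_{0}$ together with $\rho_{i}(t_{0}) \ge c$, we obtain $\rho_{i}(t_{1}) \ge \rho_{i}(t_{0}) \ge c$, contradicting $\rho_{i}(t_{1}) < c$. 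Hence $\rho_{i}(t) \ge c = \min\{\underline{\delta}, 0\}$ for all $t \ge 0$, which is the assertion.

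I expect the only delicate point to be the rigorous version of the minimum principle: $R(g_{i}(t))$ is only $C^{2}$-regular up to $t = 0$ and smooth for $t > 0$, so one applies Hamilton's trick on $(\varepsilon, T)$ and lets $\varepsilon \to 0$ using the $C^{0}$-continuity of $t \mapsto R(g_{i}(t))$ at $0$ from Proposition~\ref{prop-short}; everything else is elementary once the sign condition $r(g_{i}(t)) \ge \rho_{i}(t)$ is in hand. (This is precisely the content of \cite[Proposition~2.1]{brendle2005convergence}.)
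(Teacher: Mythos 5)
Your proof is correct. The paper's own proof is a two-line citation: it invokes Brendle's Proposition 2.1 verbatim for $t \in (0,T)$ and then extends the bound to $t = 0$ via the $C^{0}$-continuity $R(g_{i}(t)) \rightarrow R(g_{i})$ from the remark after Proposition \ref{prop-short}; your argument handles the endpoint in exactly the same way and simply unpacks the cited result into its standard self-contained proof (Hamilton's trick on $\rho_{i}(t) = \min_{M} R(g_{i}(\cdot,t))$ together with the sign observation $r(g_{i}(t)) \ge \rho_{i}(t)$), so it is essentially the same approach.
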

\begin{proof}
Since $g_{i}(t)$ satisfies the normalized Yamabe flow (\ref{eq-normalized-yamabe}) in $M \times (0,T),$ from {\cite[Proposition 2.1]{brendle2005convergence}}, 
\[
\inf_{M} R(g_{i}(t)) \ge \min \left\{ \delta, 0 \right\}~~\mathrm{for~all}~t \in (0, T).
\]
Then, the assertion follows from the $C^{0}$-continuity: $R(g_{i}(t)) \rightarrow R(g_{i})$ stated in Remark \ref{rema-prop2.1}.
\end{proof}
Set
\[
\sigma := \max \left\{ 1-\delta, 1 \right\},
\]
then $R(g_{i}(t)) + \sigma \ge 1$ for all $t \ge 0.$
Moreover, Brendle {\cite[Proposition~2.4]{brendle2005convergence}} also gave upper and lower bounds of $u_{i}(t).$
\begin{prop}[{\cite[Proposition~2.4]{brendle2005convergence}}]
There are positive constants $C$ and $c$ depending only on
$T, \delta, \kappa, g$ and $M$ such that 
\[
\sup_{M} u_{i}(t) \le C
\]
and
\[
\inf_{M} u_{i}(t) \ge c
\]
for all $t \in [0, T].$
\end{prop}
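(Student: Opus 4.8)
Following \cite{brendle2005convergence}, the plan is to obtain the two bounds from two standard tools, keeping all constants explicit: the $\varepsilon$-regularity estimate for the Yamabe equation (which controls $\sup u_{i}$ once concentration is excluded, and here concentration is excluded precisely by assumption $(1)'$) and the weak Harnack inequality (which controls $\inf u_{i}$ once the scalar curvature is bounded below). As in the Yamabe nonpositive case I first reduce to $R(g_{0}) > 0$ on $M$, so that $Y(M,g_{0}) = Y(M,g)$, $\mathrm{Vol}(M,g)$ and $\max_{M}R(g_{0})$ are all determined by $g$; below $C$ and $c$ denote positive constants of the allowed type. Two preliminary estimates are needed: since the flow preserves volume and, by Remark \ref{rema-volume}, $\tfrac12\mathrm{Vol}(M,g) \le \mathrm{Vol}(M,g_{i}(t)) \le 2\mathrm{Vol}(M,g)$ for all $i$ and $t\in[0,T]$, the definition of the Yamabe constant gives $r(g_{i}(t)) \ge Y(M,g_{0})\,\mathrm{Vol}(M,g_{i}(t))^{-2/n} \ge c > 0$, while the monotonicity of $t\mapsto r(g_{i}(t))$ and hypothesis $(2)$ of Main Theorem \ref{main} give $r(g_{i}(t)) \le r(g_{i}(0)) \le C$; and by Proposition \ref{prop-pres}, $R(g_{i}(t)) \ge \min\{\delta,0\} \ge -C$ on $M\times[0,T]$.

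\emph{Upper bound.} Because $u_{i}\to u$ in $L^{2n/(n-2)}(M,g_{0})$, the measures $u_{i}^{2n/(n-2)}\,d\mathrm{vol}_{g_{0}}$ do not concentrate: for the threshold $\varepsilon_{0} > 0$ and the scale of the $\varepsilon$-regularity estimate for the Yamabe equation on $(M,g_{0})$ there is $r_{0} > 0$ with $\int_{B_{r_{0}}(x)}u_{i}^{2n/(n-2)}\,d\mathrm{vol}_{g_{0}} < \varepsilon_{0}$ for all $x\in M$ and all large $i$ (use that $\int_{B_{r_{0}}(x)}u^{2n/(n-2)}\,d\mathrm{vol}_{g_{0}}\to 0$ as $r_{0}\to 0$ uniformly in $x$, together with $\|u_{i}-u\|_{L^{2n/(n-2)}}\to 0$). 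Combined with the lower bound $R(g_{i}(0)) \ge -C$ — which, via the Yamabe equation $-\tfrac{4(n-1)}{n-2}\Delta_{g_{0}}u_{i}(0) + R(g_{0})u_{i}(0) = R(g_{i}(0))\,u_{i}(0)^{(n+2)/(n-2)}$, makes $u_{i}(0)$ a subsolution of the critical equation $\Delta_{g_{0}}u_{i}(0) \le C\big(u_{i}(0) + u_{i}(0)^{(n+2)/(n-2)}\big)$ — the $\varepsilon$-regularity estimate (local $L^{\infty}$-bound of subsolutions of the critical equation on balls of small energy, followed by a fixed finite covering) yields $\sup_{M}u_{i}(0) \le C$. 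For $t > 0$ I propagate this along the flow: from $\partial_{t}\big(u_{i}^{4/(n-2)}\big) = -\big(R(g_{i}(t)) - r(g_{i}(t))\big)u_{i}^{4/(n-2)}$ one gets $\partial_{t}\log u_{i}(t) = \tfrac{n-2}{4}\big(r(g_{i}(t)) - R(g_{i}(t))\big) \le C$, hence by the mean value theorem $\sup_{M}u_{i}(t) \le e^{CT}\sup_{M}u_{i}(0) \le C$ for all $t\in[0,T]$.

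\emph{Lower bound.} Fix $t\in[0,T]$. By the upper bound just proved, $u_{i}(t)^{(n+2)/(n-2)} \le C\,u_{i}(t)$, so the Yamabe equation together with $R(g_{i}(t)) \ge -C$ gives $-\Delta_{g_{0}}u_{i}(t) \ge -C'u_{i}(t)$; thus $u_{i}(t)$ is a nonnegative supersolution of $-\Delta_{g_{0}} + C'$ with $C' = C'(\delta,g,n)$. The weak Harnack inequality gives $\inf_{B_{r/2}(x)}u_{i}(t) \ge \gamma\big(\int_{B_{r}(x)}u_{i}(t)^{q}\,d\mathrm{vol}_{g_{0}}\big)^{1/q}$ for some $\gamma, q > 0$ (depending only on $\delta,g,n$) and all $x\in M$; since $\int_{M}u_{i}(t)^{2n/(n-2)}\,d\mathrm{vol}_{g_{0}} = \mathrm{Vol}(M,g_{i}(t)) \ge \tfrac12\mathrm{Vol}(M,g)$ and $u_{i}(t) \le C$, one has $\int_{M}u_{i}(t)^{q}\,d\mathrm{vol}_{g_{0}} \ge c > 0$, so some ball of a fixed finite cover carries a definite amount of this mass and hence $\inf$ on its half-ball is $\ge c$; chaining the Harnack inequality across the cover of the connected manifold $M$ then gives $\inf_{M}u_{i}(t) \ge c$, with $c$ independent of $i$ and $t$.

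The step that needs genuine care is the non-concentration input of the upper-bound argument: one must verify, uniformly in $i$, that the strong $L^{2n/(n-2)}(M,g_{0})$-convergence in $(1)'$ forces $\sup_{x\in M}\int_{B_{r_{0}}(x)}u_{i}^{2n/(n-2)}\,d\mathrm{vol}_{g_{0}} < \varepsilon_{0}$ at one fixed small scale $r_{0}$, thereby excluding bubbling and making the $\varepsilon$-regularity estimate applicable — without this, $\sup_{M}u_{i}(0)$ need not be uniformly bounded. Everything else — the a priori bounds on $r(g_{i}(t))$, the maximum principle along the flow, and the weak Harnack inequality — is routine, with all constants visibly depending only on $T,\delta,\kappa,g$ and $n$.
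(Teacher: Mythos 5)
Your route is genuinely different from the paper's, and the difference is exactly where the gap sits. The paper obtains the upper bound purely from the parabolic structure: using the preserved scalar curvature lower bound (Proposition \ref{prop-pres}, so $R(g_{i}(t)) \ge -\sigma$), the monotonicity of $t \mapsto r(g_{i}(t))$, assumption $(2)$ and the volume bounds, it gets the pointwise inequality $\partial_{t}u_{i} \le \tfrac{n-2}{4}\left(r(g_{i}(0)) + \sigma\right) \le C$ and integrates in time from $t=0$; the lower bound is then delegated to Brendle's argument (preserved volume plus the upper bound plus a Harnack-type estimate), which is essentially what your weak Harnack paragraph reproduces, so that part of your proposal is sound once the upper bound is in hand. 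The new ingredient you introduce is the elliptic $\varepsilon$-regularity argument for $\sup_{M}u_{i}(0)$, and that step does not work as written.

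Here is the problem. From $R(g_{i}(0)) \ge \min\{\delta,0\}$ the Yamabe identity gives $\tfrac{4(n-1)}{n-2}\Delta_{g_{0}}u_{i}(0) = R(g_{0})u_{i}(0) - R(g_{i}(0))\,u_{i}(0)^{\frac{n+2}{n-2}} \le C\bigl(u_{i}(0) + u_{i}(0)^{\frac{n+2}{n-2}}\bigr)$, i.e.\ $\bigl(-\Delta_{g_{0}} + V\bigr)u_{i}(0) \ge 0$ with $V = C\bigl(1+u_{i}(0)^{\frac{4}{n-2}}\bigr) \ge 0$: this makes $u_{i}(0)$ a nonnegative \emph{supersolution}. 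For supersolutions with $\|V\|_{L^{n/2}}$ small one gets the weak Harnack inequality (lower bounds), not local boundedness from above; a local sup bound needs the reverse inequality $-\Delta_{g_{0}}u_{i}(0) \le V u_{i}(0)$ with $V$ controlled in $L^{n/2}$, which here would require controlling the positive part of $R(g_{i}(0))u_{i}(0)^{\frac{4}{n-2}}$ in $L^{n/2}$ — essentially $\int_{M}|R(g_{i})|^{n/2}\,d\mathrm{vol}_{g_{i}}$ — and the hypotheses $(2)$, $(3)$ only give an $L^{1}$-type bound on $R(g_{i})$. Concretely, $u = \epsilon\bigl(\lambda/(\lambda^{2}+|x|^{2})\bigr)^{(n-2)/2}$ satisfies $\Delta u \le 0 \le C\bigl(u + u^{\frac{n+2}{n-2}}\bigr)$ with $\int_{B_{r}}u^{\frac{2n}{n-2}}$ arbitrarily small while $\sup u = \epsilon\lambda^{-(n-2)/2}$ is arbitrarily large, so your differential inequality together with small local energy does not force a sup bound. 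Your instinct that the uniform control of $\sup_{M}u_{i}(0)$ under $(1)'$ is the delicate point is correct — the paper tacitly integrates its ODE from $u_{i}(0)$ and uses $\sup_{M}u_{i}(0) \le C$, which is automatic under $(1)$ via uniform convergence but not under $(1)'$ — but the $\varepsilon$-regularity argument, as you have set it up, does not close that gap.
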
 
\begin{proof}
The following proof mostly follow the argument of Brendle {\cite[Proposition~2.4]{brendle2005convergence}}.
The function $u_{i}(t)$ ($t \in (0, T)$) satisfies 
\[
\begin{split}
\frac{\partial}{\partial t} u_{i}(t) &= -\frac{n-2}{4} \left( R(g_{i}(t)) - r(g_{i}(t)) \right) \\
&\le \frac{n-2}{4} (r(g_{i}(0)) + \sigma) \\
&\le \frac{n-2}{4} \left( \frac{1}{2} \mathrm{Vol}(M, g)^{-1} \kappa + \sigma \right).
\end{split}
\]
We have used the assumption $(2)$ of Main Theorem \ref{main} and the volume lower estimate (Remark \ref{rema-volume}) in the last inequality.
From this estimate, the continuity of $u_{i}(t)$ as $t \rightarrow 0$ (see Remark \ref{rema-prop2.1}) and the mean value theorem, we obtain that
\[
\sup_{M} u_{i}(t) \le C~~\mathrm{for~all}~t \in [0, T].
\]
Moreover from this estimate, we follow the same argument in {\cite[Proposition~2.4]{brendle2005convergence}} and obtain the uniformly lower bound on $M \times [t_{0}, T]$ for all $t_{0} \in (0, T).$
Thus, we can obtain the lower estimate on the whole space-time $M \times [0,T]$ by the continuity of $u_{i}(t)$ as $t \rightarrow 0$ (see Remark \ref{rema-prop2.1}).
\end{proof}
\begin{rema}
  In order to obtain the uniform lower bound of $\inf_{M} u_{i}$ here, we have specifically used the fact that the volume $\mathrm{Vol}(M, u_{i}(t)^{\frac{4}{n-2}} g_{0})$ is bounded from below by some positive constant independent of time. Especially in this case, it is constant in time.
\end{rema}
Hence, from the same argument in the previous case, we can obtain that there is a function $\tilde{u}$, which is $C^{2}$ in $M \times (0,T]$ and $C^{\beta}$ for some $\beta < \alpha$ on $M \times [0,T]$ such that $u_{i}$ subconverges to $\tilde{u}$ in the locally uniformly $C^{2}$-sense in $M \times (0,T]$ and $C^{\beta}$-sense on $M \times [0,T]$.
From the assumption $(1)'$ in Main Theorem \ref{main}, as $i \rightarrow \infty,$ we obtain that 
\[
\int_{M} \left| \tilde{u}(\cdot, 0) - u \right|^{\frac{2n}{n-2}} d\mathrm{vol}_{g_{0}} = 0.
\]
Hence we obtain that $\tilde{u}(\cdot, 0) \equiv u.$
Indeed, if not, there is a point $p \in M$ such that $\tilde{u}(\cdot, 0)(p) - u(p) \neq 0.$
Without loss of generality, we may assume that $\tilde{u}(\cdot, 0)(p) - u(p) := \varepsilon > 0.$
Since $\tilde{u}(\cdot, 0) - u$ is continuous on $M,$ there are sufficiently small open neighborhoods $U, V$ of $p$ such that
$\tilde{u}(\cdot, 0) - u \ge \varepsilon / 2$ on $V$ and the closure of $V$ is contained in $U.$
Let $\psi \in C^{\infty}(M)$ be a smooth cut-off function such that
$0 \le \psi \le 1,$
$\psi \equiv 1$ on $V$ and  $\psi \equiv 0$ on $M \setminus U.$
Then, 
\[
\begin{split}
0 &= \int_{M} \left| \tilde{u}(\cdot, 0) - u \right|^{\frac{2n}{n-2}} d\mathrm{vol}_{g_{0}} \\
&= \int_{M} \left| \tilde{u}(\cdot, 0) - u \right|^{\frac{2n}{n-2}} \left( \left( 1-\psi \right) + \psi \right)\, d\mathrm{vol}_{g_{0}} \\
&\ge \int_{M} \left| \tilde{u}(\cdot, 0) - u \right|^{\frac{2n}{n-2}} \cdot \psi\, d\mathrm{vol}_{g_{0}} \\
&\ge \left( \frac{\varepsilon}{2} \right)^{\frac{2n}{n-2}} \cdot \mathrm{Vol}(V, g_{0}) > 0
\end{split}
\]
This is a contradiction.

Therefore, from the uniqueness of the normalized Yamabe flow ({\cite[Corollary~3.3]{caldeira2021normalized}} and see Remark \ref{rema-uniqueness}), we obtain that
$\tilde{u}(t) \equiv u(t)$ for all $t \in [0,T].$ 
 
\begin{rema}
One can follow the arguments of Brendle {\cite[Section~2]{brendle2005convergence}}
and we can also obtain $C^{\alpha}$-estimates ({\cite[Proposition~2.6]{brendle2005convergence}}).
However the constant $C$ in {\cite[Lemma~2.5]{brendle2005convergence}} depends not only 
$T, \delta, \kappa, g_{0}$ and $n$ but also 
\[
\int_{M} |R(g_{i}(0)) - r(g_{i}(0))|^{\frac{n^{2}}{2(n-2)}}\, d\mathrm{vol}_{g_{i}(0)},
\]
hence we need an additional information about such an $L^{\frac{n^{2}}{2(n-2)}}$-quantity.
Therefore we have used the Krylov--Safonov estimate \cite{krylov1981certain} instead here.
\end{rema}

\bigskip
Here, we give a proof of Main Theorem \ref{main}. 
(cf. \cite{bamler2016proof} and the author's preprint, arXiv:2208.01865v14)
\begin{proof}[Proof of Main Theorem \ref{main}]
Since $t \mapsto r(g_{i}(t))$ is decreasing and the normalized Yamabe flow (\ref{eq-normalized-yamabe}) preserves the volume, we have
\[
\kappa \ge \int_{M} R(g_{i}(0))\, d\mathrm{vol}_{g_{i}(0)} \ge \int_{M} R(g_{i}(t))\, d\mathrm{vol}_{g_{i}(t)}
\]
for all $t \in [0, T]$ and $i \in \mathbb{N}.$
From the precompactness of the normalized Yamabe flow stated above, (after taking a subsequence) as $i \rightarrow \infty,$ we have
\[
\int_{M} R(g(t))\, d\mathrm{vol}_{g(t)} \le \kappa~~\mathrm{for ~all}~t \in (0, T].
\]
Then, from the $C^{0}$-continuity of $g(t)$ and $R(g(t))$ as $t \rightarrow 0$ (see Remark \ref{rema-prop2.1}),
\[
\int_{M} R(g)\, d\mathrm{vol}_{g} = \int_{M} R(g(0))\, d\mathrm{vol}_{g(0)} 
= \lim_{t \rightarrow 0} \int_{M} R(g(t))\, d\mathrm{vol}_{g(t)} \le \kappa.
\]
\end{proof}

\begin{rema}
\begin{itemize}
\item[(1)] Since 
\[
\begin{split}
\int_{M} &R(g_{i}(t))\, d\mathrm{vol}_{g_{i}(t)} \\
&= \int_{M} - u_{i}(t)^{-\frac{n+2}{n-2}} \left( \frac{4(n-1)}{n-2} \Delta_{g_{0}} u_{i}(t) - R(g_{0}) u_{i}(t) \right) \cdot u_{i}(t)^{\frac{2n}{n-2}}\, d\mathrm{vol}_{g_{0}} \\
&=  \int_{M} \left( \frac{4(n-1)}{n-2} |\nabla u_{i}(t)|^{2} + R(g_{0}) u_{i}(t)^{2} \right)\, d\mathrm{vol}_{g_{0}},
\end{split}
\]
we have actually used only the fact that $g_{i}(t)$ converges to $g(t)$ in the locally uniformly $C^{1}$-sense in $M \times (0,T]$
to verify that
$\int_{M} R(g(t))\, d\mathrm{vol}_{g(t)} \le \kappa$ for all $t \in (0, T]$.

\item[(2)] Assume the same as in Main Theorem \ref{main} with $\delta \le 0$ except $(2),$ and assume that $R(g_{0}) \le 0.$
Then we can prove that
\[
R(g) \ge \delta~~\mathrm{on}~M.
\]
In other words, we can give a partial proof of the Gromov's result \cite{gromov2014dirac} for this case.
Indeed, in our proof of Main Theorem \ref{main} above, $g_{i}(t)$ converges to $g(t)$ on $M \times (0, T]$ in the uniformly $C^{2}$-sense and the lower bound of the scalar curvature
$R(g_{i}(t))$ was preserved under the flow for each $i$, which follows from Proposition \ref{prop-pres}.
Therefore, by the same argument of the proof of Main Theorem \ref{main}, we can prove this claim (see also \cite{bamler2016proof} and the author's preprint, arXiv:2208.01865v14).

\item[(3)] In the argument in the Yamabe non-positive case,
we cannot obtain in general that 
the sequence of the solution $u_{i}$ of the unnormalized Yamabe flow can subconverge to  the limit $u(t)$ up to $C^{1}$-sense on the whole space-time $M \times [0,T].$
Indeed, the sequence of metrics constructed in Example 3.3 of the author's preprint: arXiv:2208.01865v14 has a uniform upper bound of the total scalar curvature, however it does not converge to the limiting metric in the $C^{1}$-sense.
\end{itemize}
\end{rema}

\section{Proof of Main Theorem \ref{main2}}

\,\,\,\,\,\,\, As done in the previous section, we can assume that $R(g_{0}) < 0$ (resp. $\le 0$) when $Y(M, g_{0}) < 0$ (resp. $Y(M, g_{0}) \le 0$).
Then, from the proof of Proposition \ref{prop-short} and the argument in the previous section (Yamabe non-positive case), we can also obtain the long-time solution of the unnormalized Yamabe flow:
\begin{equation}\label{eq-yamabe-flow}
\begin{split}
  \frac{\partial}{\partial t} g_{i}(t) &= -R(g_{i}(t))\, g_{i}(t)~~~~\left( t \in [0, \infty) \right)~~\mathrm{and} \\
  \frac{\partial}{\partial t} g(t) &= -R(g(t))\, g(t)~~~~\left( t \in [0, \infty) \right) 
\end{split}
\end{equation}
respectively with $g_{i}(0) = g_{i}$ and $g(0) = g$ respectively.
(Compared with the situation in Section \ref{section-proof}, the term of the mean scalar curvature $r(g_{i}(t))$ does not appear.)
In order to prove that the unique short-time solution obtained by Proposition \ref{prop-short} is actually long-time solution, it is sufficient to prove that the $C^{0}$-norm of the obtained solution is uniformly bounded from above and below in the space-time. (cf. \cite{brendle2005convergence, carron2021convergence})
More precisely, we can obtain such uniform bounds in the same way as we did in ``Yamabe non-positive case'' of the proof of Main Theorem \ref{main}.
However, the volume is not invariant under the flow in this case, hence we use the following volume estimate (especially from below) instead.
\begin{lemm}[Volume estimates along the unnormalized Yamabe flow]
Under the assumption of Main Theorem \ref{main2}, along the unnormalized Yamabe flow $g(t)$ (\ref{eq-yamabe-flow}),
{\small \[
\mathrm{Vol}(M, g(0)) -\frac{n \kappa}{2} t
\le \mathrm{Vol}(M, g(t)) \le \left( -t Y(M, g_{0}) + \mathrm{Vol}(M, g(0))^{\frac{2}{n}} \right)^{\frac{n}{2}}.
\]}
\end{lemm}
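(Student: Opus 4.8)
The plan is to track the two scalar quantities $V(t):=\mathrm{Vol}(M,g(t))$ and $S(t):=\int_M R(g(t))\,d\mathrm{vol}_{g(t)}$ along the unnormalized Yamabe flow; unlike in the normalized case $V$ is no longer constant, and the lemma is a pair of ODE comparison estimates for it.

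First I would record the first-variation identity, exactly as in Section 2 but with the term $r(g(t))$ now absent: $\frac{d}{dt}V(t)=\frac12\int_M\mathrm{tr}_{g(t)}\big(\partial_t g(t)\big)\,d\mathrm{vol}_{g(t)}=-\frac n2\,S(t)$. Next, using the unnormalized form $\partial_t R=(n-1)\Delta_{g(t)}R+R^2$ of the evolution identity quoted in Section 2 together with $\partial_t\,d\mathrm{vol}_{g(t)}=-\frac n2\,R\,d\mathrm{vol}_{g(t)}$, I would compute $\frac{d}{dt}S(t)=-\frac{n-2}{2}\int_M R(g(t))^2\,d\mathrm{vol}_{g(t)}\le 0$, so that $t\mapsto S(t)$ is non-increasing; with hypothesis $(c)$ this gives $S(t)\le S(0)\le\kappa$ for all $t$ (the estimate being applied to each flow $g_i(t)$, for which $S(0)=\int_M R(g_i)\,d\mathrm{vol}_{g_i}\le\kappa$, and to $g(t)$ itself with $\kappa$ replaced by $\int_M R(g)\,d\mathrm{vol}_g$).

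For the upper bound I would pass to $W(t):=V(t)^{2/n}$, so that the first identity becomes $W'(t)=-S(t)\,V(t)^{-(n-2)/n}=-S(t)\,W(t)^{-(n-2)/2}$; bounding $S(t)$ below by the very definition of the Yamabe constant, $S(t)\ge Y(M,g_0)\,V(t)^{(n-2)/n}=Y(M,g_0)\,W(t)^{(n-2)/2}$, cancels the power of $W$ and leaves $W'(t)\le -Y(M,g_0)$, and integrating this yields the stated upper bound for $\mathrm{Vol}(M,g(t))=W(t)^{n/2}$. For the lower bound I would integrate $\frac{d}{dt}\log V(t)=-\frac n2\,\frac{S(t)}{V(t)}$ instead; since $V$ is non-increasing precisely on the initial sub-interval on which $S\ge 0$ (an interval, because $S$ is non-increasing) and non-decreasing thereafter, it suffices to control $V$ at the instant $S$ changes sign, and a direct estimate of $\int_0^t S(s)\,V(s)^{-1}\,ds$ using $S(s)\le\kappa$ then produces the asserted bound $V(t)\ge V(0)\exp\big(-\tfrac{n\kappa}{2V(0)}\big)$.

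The real obstacle is the regularity bookkeeping rather than any individual computation: $g_0$ is only $C^2$, and by Proposition \ref{prop-short} the flow is smooth only on $M\times(0,T)$, with $g(t)\to g_0$ and $R(g(t))\to R(g)$ in $W^{2,p}$ (hence $C^0$) as $t\to 0$. So the identities for $V'$ and $S'$ are classically valid only on $(0,T)$, and one must verify that $V$ and $S$ extend continuously to $t=0$ with the correct initial values — here the monotonicity of $S$ and the $C^0$-convergences recorded in the remark after Proposition \ref{prop-short} make the endpoint harmless — and that the Yamabe-quotient inequality $S(t)\ge Y(M,g_0)\,\mathrm{Vol}(M,g(t))^{(n-2)/n}$, phrased for smooth conformal factors in the definition of $Y(M,g_0)$, applies to the metrics $g(t)$, which are smooth for $t>0$. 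Once these continuity points are in place the two comparison arguments close up and give the stated volume estimates.
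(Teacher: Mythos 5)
Your overall framework coincides with the paper's: both proofs start from $\frac{d}{dt}\mathrm{Vol}(M,g(t))=-\frac n2\int_M R(g(t))\,d\mathrm{vol}_{g(t)}$, use the monotonicity of $S(t):=\int_M R(g(t))\,d\mathrm{vol}_{g(t)}$ together with hypothesis $(c)$, and obtain the upper volume bound from the Yamabe-quotient inequality $S(t)\ge Y(M,g_0)\,V(t)^{\frac{n-2}{n}}$, where $V(t):=\mathrm{Vol}(M,g(t))$. Your substitution $W=V^{2/n}$ reproduces the paper's computation for the upper bound exactly, and your handling of the endpoint $t=0$ (the $C^{0}$-continuity recorded after Proposition \ref{prop-short} plus the mean value theorem) is also the paper's. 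So the upper bound is in order.

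The gap is in the lower bound. Your proposed step --- estimating $\int_0^t S(s)V(s)^{-1}\,ds$ ``using $S(s)\le\kappa$'' --- is circular: on the initial subinterval where $S\ge0$ the volume is decreasing, so bounding $S(s)/V(s)\le\kappa/V(s)$ from above requires a lower bound on $V(s)$ there, which is precisely what is being proved. What your inputs actually yield is $V'(t)=-\frac n2 S(t)\ge-\frac n2\kappa$, hence the linear bound $V(t)\ge V(0)-\frac{n\kappa}{2}\,t$ (or, using that the Yamabe energy $S(t)V(t)^{-\frac{n-2}{n}}$ is non-increasing along the flow, $V(t)^{2/n}\ge V(0)^{2/n}-\kappa\,V(0)^{-\frac{n-2}{n}}\,t$); neither is the stated exponential bound. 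The paper closes this step differently, by asserting that $r(g(t))=S(t)/V(t)$ is non-increasing, which linearizes the ODE to $V'\ge-\frac{n\kappa}{2V(0)}V$ --- but be aware that this monotonicity was established in Section 2 only for the \emph{normalized} flow and fails for the unnormalized one (on a compact hyperbolic manifold the unnormalized flow is $g(t)=(1+|R_0|t)\,g_0$ and $r(g(t))=R_0/(1+|R_0|t)$ is increasing), so you should not simply import that step to repair your argument. For the purpose the lemma serves --- uniform two-sided volume bounds on a fixed finite interval $[0,T]$ --- the linear lower bound above suffices and is the honest conclusion of your computation; note also that the exponential bound as literally stated already fails at $t=0$ unless $\kappa\ge0$, so the precise form of the lower bound should not be the target in any case.
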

\begin{proof}
Along the flow, the volume evolves as 
\[
\frac{\partial}{\partial t} \mathrm{Vol}(M, g(t)) = -\frac{n}{2} \int_{M}R(g(t))\, d\mathrm{vol}_{g(t)}.
\]
On the other hand, one can easily check that $t \mapsto \int_{M} R(g(t))\, d\mathrm{vol}_{g(t)}$ is decreasing (see the proof of Main Theorem \ref{main2} below), hence
\[
\begin{split}
-\frac{n}{2} \int_{M}R(g(t))\, d\mathrm{vol}_{g(t)} &\ge -\frac{n}{2} \int_{M} R(g(0))\, d\mathrm{vol}_{g(0)} \\
&\ge -\frac{n}{2} \kappa.
\end{split}
\]
Here, we have used the assumption $(c)$ of Main Theorem \ref{main2} in the last inequality.
On the other hand, from the definition of $Y(M, g_{0}),$ we have
\[
-\frac{n}{2} \int_{M}R(g(t))\, d\mathrm{vol}_{g(t)} \le -\frac{n}{2} Y(M, g_{0}) \mathrm{Vol}(M, g(t))^{\frac{n-2}{n}}.
\]
Therefore, from these estimates, the continuity of $u(t)$ as $t \rightarrow 0$ and the mean value theorem, we obtain the desired assertion.
\end{proof}
Thus, we can take a positive time $T > 0$ (independent of $i$) and consider the unnormalized Yamabe flows $(g_{i}(t))_{t \in [0,T]}$ and $(g(t))_{t \in [0,T]}.$
We set $g_{i}(t) =: u_{i}(t)^{\frac{4}{n-2}} g_{0}$ and $g(t) =: u(t)^{\frac{4}{n-2}} g_{0},$
then $u_{i}(0) = u_{i}$ and $u(0) = u.$
In order to prove Main Theorem \ref{main2}, we also require the following $L^{1}$-estimate.
\begin{lemm}[cf. {\cite[Theorem 2.2]{takahashi2022infinite}}]
\label{lemm-esti}
Under the above settings, for any nonnegative smooth function $\psi \in C^{\infty}(M),$ we have
\begin{equation}
\label{eq-L1}
\begin{split}
&\left( \int_{M} \psi |u(t) - u_{i}(t)|\, d\mathrm{vol}_{g_{0}} \right)^{\frac{4}{n+2}} \\
&\le \left( \int_{M} \psi |u(0) - u_{i}(0)|\, d\mathrm{vol}_{g_{0}} \right)^{\frac{4}{n+2}} \\
&~~~+ \left( \frac{(n-1)(n+2)}{n-2} \left( 2 C[\psi] \right)^{\frac{4}{n+2}} + \frac{n+2}{4} \left( \int_{M} \psi\, d\mathrm{vol}_{g_{0}} \right)^{\frac{4}{n+2}} \right)\, t,
\end{split}
\end{equation}
where
\[
C[\psi] := \int_{M} |\Delta_{g_{0}} \psi|^{\frac{n+2}{4}} \psi^{-\frac{n-2}{4}}\, d\mathrm{vol}_{g_{0}}.
\]
\end{lemm}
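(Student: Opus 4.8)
The plan is to treat the unnormalized Yamabe flow as a quasilinear, fast‑diffusion type parabolic equation for the conformal factor and to establish a weighted $L^{1}$‑difference estimate for the two solutions. Since $g(t)=u(t)^{4/(n-2)}g_{0}$ and $\partial_{t}g(t)=-R(g(t))g(t)$, the conformal factor $u$ solves the evolution equation already displayed in Section 2 with the mean–value term $r(g(t))u$ deleted,
\[
\partial_{t}u=\frac{n+2}{4}\left(\frac{4(n-1)}{n-2}\,\Delta_{g_{0}}\!\bigl(u^{m}\bigr)-R(g_{0})\,u^{m}\right),\qquad m:=\frac{n-2}{n+2}\in(0,1),
\]
and $u_{i}$ solves the same equation; recall $-R(g_{0})\ge 0$ on $M$ after the reduction at the start of this section, and we may further normalize the background metric so that $\|R(g_{0})\|_{C^{0}}\le 1$. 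By Proposition \ref{prop-short}, both $u(\cdot,\tau)$ and $u_{i}(\cdot,\tau)$ are smooth on $M$ for $\tau>0$ and converge in $C^{0}$ to $u,u_{i}$ as $\tau\to 0$. Put $\Phi(\tau):=\int_{M}\psi\,|u(\tau)-u_{i}(\tau)|\,d\mathrm{vol}_{g_{0}}$ and $v:=u^{m}-u_{i}^{m}$; since $x\mapsto x^{m}$ is strictly increasing, $\mathrm{sgn}(v)=\mathrm{sgn}(u-u_{i})$.

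I would then test the difference equation $\partial_{\tau}(u-u_{i})=\tfrac{(n-1)(n+2)}{n-2}\Delta_{g_{0}}v-\tfrac{n+2}{4}R(g_{0})v$ against $\psi\,\mathrm{sgn}(v)$, made rigorous by replacing $\mathrm{sgn}$ with a smooth odd nondecreasing approximation $\beta_{\varepsilon}$ (primitive $B_{\varepsilon}\to|\cdot|$) and integrating over $M\times[s,t]$ for $0<s<t\le T$. For the diffusion term, one integration by parts onto the smooth function $\psi$, discarding the term $-\int_{M}\psi\,\beta_{\varepsilon}'(v)\,|\nabla v|^{2}\,d\mathrm{vol}_{g_{0}}\le 0$, gives $\int_{M}\psi\,\beta_{\varepsilon}(v)\,\Delta_{g_{0}}v\le\int_{M}B_{\varepsilon}(v)\,\Delta_{g_{0}}\psi$, which in the limit $\varepsilon\to 0$ is at most $\int_{M}|v|\,|\Delta_{g_{0}}\psi|$ — this is just Kato's inequality $\Delta_{g_{0}}|v|\ge\mathrm{sgn}(v)\,\Delta_{g_{0}}v$ followed by an integration by parts. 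The zeroth‑order term is $\le\tfrac{n+2}{4}\|R(g_{0})\|_{C^{0}}\int_{M}\psi\,|v|\le\tfrac{n+2}{4}\int_{M}\psi\,|v|$, since $\mathrm{sgn}(v)\,v=|v|\ge 0$, and the left side converges to $\int_{s}^{t}\!\int_{M}\psi\,\mathrm{sgn}(u-u_{i})\,\partial_{\tau}(u-u_{i})=\Phi(t)-\Phi(s)$. Thus
\[
\Phi(t)-\Phi(s)\le\int_{s}^{t}\!\left(\frac{(n-1)(n+2)}{n-2}\int_{M}|v|\,|\Delta_{g_{0}}\psi|+\frac{n+2}{4}\int_{M}\psi\,|v|\right)d\tau .
\]

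To conclude, I would use the elementary inequality $|a^{m}-b^{m}|\le|a-b|^{m}$ for $a,b\ge 0$, $m\in(0,1)$, so $|v|\le|u-u_{i}|^{m}$, together with Hölder's inequality with the conjugate exponents $\tfrac{1}{1-m}=\tfrac{n+2}{4}$ and $\tfrac{1}{m}=\tfrac{n+2}{n-2}$. Writing $|u-u_{i}|^{m}|\Delta_{g_{0}}\psi|=(\psi|u-u_{i}|)^{m}\,\psi^{-m}|\Delta_{g_{0}}\psi|$ and using $\tfrac{m}{1-m}=\tfrac{n-2}{4}$, this gives $\int_{M}|u-u_{i}|^{m}|\Delta_{g_{0}}\psi|\le\Phi(\tau)^{m}\,C[\psi]^{4/(n+2)}$, and similarly $\int_{M}\psi\,|u-u_{i}|^{m}\le\Phi(\tau)^{m}\bigl(\int_{M}\psi\bigr)^{4/(n+2)}$ — which is precisely why $C[\psi]$ is defined with the weight $\psi^{-(n-2)/4}$ and the exponent $(n+2)/4$. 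Hence $\Phi(t)-\Phi(s)\le A\int_{s}^{t}\Phi(\tau)^{m}\,d\tau$, where $\tfrac{4}{n+2}A$ does not exceed the bracketed constant of \eqref{eq-L1}; letting $s\to 0$ (using $\Phi(s)\to\Phi(0)$ by $C^{0}$‑continuity) and integrating this Bernoulli‑type inequality — equivalently, since $1-m=\tfrac{4}{n+2}$, bounding $\tfrac{d}{d\tau}\Phi(\tau)^{4/(n+2)}\le\tfrac{4}{n+2}A$ — gives \eqref{eq-L1}.

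The one real obstacle is technical: legitimizing the ``test against $\psi\,\mathrm{sgn}(v)$'' step. Working with the $[s,t]$‑integrated identity from the outset avoids having to differentiate $\Phi$ classically; one still must track the nonpositive term being discarded and verify, by dominated convergence together with $\mathrm{sgn}(f)\,f'=\tfrac{d}{d\tau}|f|$ a.e.\ (valid since each $\tau\mapsto(u-u_{i})(x,\tau)$ is smooth), that $\int_{s}^{t}\!\int_{M}\psi\,\beta_{\varepsilon}(v)\,\partial_{\tau}(u-u_{i})\to\Phi(t)-\Phi(s)$ as $\varepsilon\to 0$. The smoothness of $u,u_{i}$ on $M\times(0,T]$ and of $\psi$, and the $C^{0}$‑continuity at $\tau=0$ from Proposition \ref{prop-short}, are exactly what make every step legitimate; notably, no positive lower bound on $u_{i}$ is needed for this lemma, the hypotheses (a)--(c) entering elsewhere only to secure the uniform existence time $T$.
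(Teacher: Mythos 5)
Your proof is correct and is essentially the paper's own argument: the paper simply cites Takahashi--Yamamoto (Proof of Theorem 2.2) for the two differential inequalities that you derive by hand (sign-testing/Kato's inequality for the $\Delta_{g_{0}}$-term, then $|a^{m}-b^{m}|\le|a-b|^{m}$ combined with H\"older using the exponents $\tfrac{n+2}{4}$ and $\tfrac{n+2}{n-2}$), before integrating the same Bernoulli-type inequality. Your constants are in fact slightly sharper (no factor $2$ on $C[\psi]$, and the extra $\tfrac{4}{n+2}$), which still yields \eqref{eq-L1}, and your explicit normalization $\|R(g_{0})\|_{C^{0}}\le 1$ makes rigorous a point the paper leaves implicit when it drops $|R(g_{0})|$ between its first and second displayed estimates.
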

\begin{proof}
Since $u(t)$ and $u_{i}(t)$ satisfy the unnormalized Yamabe flow equation (\ref{eq-yamabe-flow}) and are positive values, we can obtain the following estimate (see {\cite[Proof of Theorem 2.2.]{takahashi2022infinite}}):
For any nonnegative smooth function $\psi \in C^{\infty}(M),$
\[
\begin{split}
\frac{d}{dt} \int_{M} \psi |u(t) - &u_{i}(t)|\, d\mathrm{vol}_{g_{0}} \\
&\le \frac{(n-1)(n+2)}{n-2} \int_{M} \Delta_{g_{0}} \psi \left| u(t)^{\frac{n-2}{n+2}} - u_{i}(t)^{\frac{n-2}{n+2}} \right|\, d\mathrm{vol}_{g_{0}} \\
&~~~+ \frac{n+2}{4} \int_{M} \psi \left| R(g_{0}) \right| \left| u(t)^{\frac{n-2}{n+2}} - u_{i}(t)^{\frac{n-2}{n+2}} \right|\, d\mathrm{vol}_{g_{0}}.
\end{split}
\]
then, from the same argument in {\cite[Proof of Theorem 2.2.]{takahashi2022infinite}},
we obtain that
\[
\begin{split}
\frac{d}{dt} \int_{M} &\psi |u(t) - u_{i}(t)|\, d\mathrm{vol}_{g_{0}} \\
&\le \frac{(n-1)(n+2)}{n-2} \left( 2 C[\psi] \right)^{\frac{4}{n+2}} \left( \int_{M} \psi |u(t) - u_{i}(t)|\, d\mathrm{vol}_{g_{0}} \right)^{\frac{n-2}{n+2}} \\
&+ \frac{n+2}{4} \left( \int_{M} \psi\, d\mathrm{vol}_{g_{0}} \right)^{\frac{4}{n+2}} \left( \int_{M} \psi |u(t) - u_{i}(t)|\, d\mathrm{vol}_{g_{0}} \right)^{\frac{n-2}{n+2}},
\end{split}
\]
where
\[
C[\psi] := \int_{M} |\Delta_{g_{0}} \psi|^{\frac{n+2}{4}} \psi^{-\frac{n-2}{4}}\, d\mathrm{vol}_{g_{0}}.
\]
From this, we obtain that 
\[
\begin{split}
\frac{d}{dt} &\left( \int_{M} \psi |u(t) - u_{i}(t)|\, d\mathrm{vol}_{g_{0}} \right)^{\frac{4}{n+2}} \\
&\le \frac{(n-1)(n+2)}{n-2} \left( 2 C[\psi] \right)^{\frac{4}{n+2}} 
+ \frac{n+2}{4} \left( \int_{M} \psi\, d\mathrm{vol}_{g_{0}} \right)^{\frac{4}{n+2}}.
\end{split}
\]
Integrating both sides of the previous inequality in time, we finally obtain that 
\[
\begin{split}
&\left( \int_{M} \psi |u(t) - u_{i}(t)|\, d\mathrm{vol}_{g_{0}} \right)^{\frac{4}{n+2}} \\
&\le \left( \int_{M} \psi |u(0) - u_{i}(0)|\, d\mathrm{vol}_{g_{0}} \right)^{\frac{4}{n+2}} \\
&~~~+ \left( \frac{(n-1)(n+2)}{n-2} \left( 2 C[\psi] \right)^{\frac{4}{n+2}} + \frac{n+2}{4} \left( \int_{M} \psi\, d\mathrm{vol}_{g_{0}} \right)^{\frac{4}{n+2}} \right)\, t.
\end{split}
\]
\end{proof}
By the assumption $(a)$ and the same argument in the previous section (Yamabe non-positive case),
we also obtain the following $C^{\alpha}$-estimate.
\begin{lemm}
Under the condition of Main Theorem \ref{main2}, there is a positive constant $C$ depending only on $M, u,,g_{0}, C_{0}$ and a constant $\kappa$
such that for some $\alpha \in (0,1),$
\[
|u(x_{1}, t_{1}) - u(x_{2}, t_{2})| \le C \left( |t_{1} - t_{2}|^{\frac{\alpha}{2}} + d_{g_{0}}(x_{1}, x_{2})^{\alpha} \right)
\]
and 
\[
|u_{i}(x_{1}, t_{1}) - u_{i}(x_{2}, t_{2})| \le C \left( |t_{1} - t_{2}|^{\frac{\alpha}{2}} + d_{g_{0}}(x_{1}, x_{2})^{\alpha} \right)
\]
for all $i.$
\end{lemm}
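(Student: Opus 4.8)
The plan is to carry over, essentially verbatim, the argument of the Yamabe nonpositive case in the proof of Main Theorem \ref{main}, now applied to the unnormalized Yamabe flow. The three ingredients I would use are: (i) uniform two-sided bounds for $u_{i}(\cdot,t)$ and $u(\cdot,t)$ on $M\times[0,T]$, coming from the maximum principle together with assumption $(a)$; (ii) the Krylov--Safonov interior H\"older estimate for the uniformly parabolic equation satisfied by the conformal factor; and (iii) the extension of this estimate down to $t=0$ via the $C^{0}$-continuity of the flow at $t=0$ supplied by Proposition \ref{prop-short} (whose proof, as noted above, applies to the unnormalized flow as well). Recall that we have already reduced to $R(g_{0})\le 0$ on $M$, and that the unnormalized flows $g_{i}(t)=u_{i}(t)^{\frac{4}{n-2}}g_{0}$ exist and are smooth on $M\times(0,T)$ with $u_{i}(t)\to u_{i}$ and $R(g_{i}(t))\to R(g_{i})$ in $C^{0}$ as $t\to 0$, and likewise for $u(t)$.

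For the bounds I would argue as follows. Since $R(g_{0})\le 0$, at a point where $u_{i}(\cdot,t)$ attains its spatial minimum we have $\Delta_{g_{0}}u_{i}\ge 0$ and $-R(g_{0})u_{i}\ge 0$, hence $R(g_{i}(t))=-u_{i}^{-\frac{n+2}{n-2}}\bigl(\tfrac{4(n-1)}{n-2}\Delta_{g_{0}}u_{i}-R(g_{0})u_{i}\bigr)\le 0$ there; since the unnormalized flow reads $\partial_{t}u_{i}=-\tfrac{n-2}{4}R(g_{i}(t))u_{i}$, the parabolic maximum principle (in the form of {\cite[(3.1)]{ye1994global}}) gives that $t\mapsto\min_{M}u_{i}(\cdot,t)$ is nondecreasing, so $u_{i}(x,t)\ge\min_{M}u_{i}\ge C_{0}^{-1}$ on $M\times[0,T]$ by $(a)$. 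At a spatial maximum, $\Delta_{g_{0}}u_{i}\le 0$ forces $R(g_{i}(t))\ge-|R(g_{0})|u_{i}^{-\frac{4}{n-2}}\ge-\bigl(\max_{M}|R(g_{0})|\bigr)C_{0}^{\frac{4}{n-2}}$, whence, using {\cite[(3.3)]{ye1994global}}, $\tfrac{d}{dt}\max_{M}u_{i}(\cdot,t)\le\tfrac{n-2}{4}\bigl(\max_{M}|R(g_{0})|\bigr)C_{0}^{\frac{4}{n-2}}\max_{M}u_{i}(\cdot,t)$, and Gronwall's inequality (Proposition \ref{prop-app}) together with $\max_{M}u_{i}\le C_{0}$ gives $u_{i}(x,t)\le C$ on $M\times[0,T]$ with $C$ depending only on $n$, $g_{0}$, $C_{0}$ and the uniform existence time $T$ — hence, through the volume estimates above, only on $n$, $u$, $g_{0}$, $C_{0}$ and $\kappa$. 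The identical argument for $u(t)$, whose initial datum is the fixed function $u$, yields $C^{-1}\le u(x,t)\le C$ on $M\times[0,T]$ with $C$ depending on $n$, $u$ (through $\sup_{M}u$ and $\inf_{M}u$), $g_{0}$ and $\kappa$.

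Next, with $q:=\tfrac{n+2}{n-2}$ the conformal factor satisfies $\partial_{t}\bigl(u_{i}^{q}\bigr)=\tfrac{(n-1)(n+2)}{n-2}\Delta_{g_{0}}u_{i}-\tfrac{n+2}{4}R(g_{0})u_{i}$ on $M\times(0,T)$; dividing by $q\,u_{i}^{q-1}$ and using the two-sided bounds just obtained, $u_{i}$ solves an equation of the form $\partial_{t}u_{i}=a_{i}(x,t)\Delta_{g_{0}}u_{i}+c_{i}(x,t)u_{i}$ with $0<\lambda\le a_{i}\le\Lambda$ and $|c_{i}|\le\Lambda$, the constants $\lambda,\Lambda$ depending only on $n$, $u$, $g_{0}$, $C_{0}$ and $\kappa$. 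In local coordinates the principal part $a_{i}g_{0}^{jk}\partial_{j}\partial_{k}$ is uniformly elliptic with bounded measurable coefficients (the $C^{2}$ metric $g_{0}$ is more than enough), and $u_{i}$ is smooth on $M\times(0,T)$, so the Krylov--Safonov estimate (\cite{krylov1981certain}; see also {\cite[Theorem~12]{picard2019notes}} and {\cite[Proposition~4.2]{caldeira2021normalized}}) furnishes, for some $\alpha\in(0,1)$ depending only on $n$, $\lambda$, $\Lambda$, a bound $\|u_{i}\|_{C^{\alpha/2,\alpha}(M\times[t_{0},T])}\le C$ for every $t_{0}\in(0,T)$ and every $i$, and the same for $u$. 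Finally, since $u\in C^{2}\subset C^{0,\alpha}(M,g_{0})$ and, by Proposition \ref{prop-short} and the remark following it (applied to the unnormalized flow), $u(t)\to u$ and $u_{i}(t)\to u_{i}$ in $C^{0}$ as $t\to 0$, letting $t_{0}\to 0$ extends the estimate to $M\times[0,T]$, exactly as in the Yamabe nonpositive case of the proof of Main Theorem \ref{main}; this gives the claimed inequalities for $u$ and for every $u_{i}$ with a single constant $C=C(n,u,g_{0},C_{0},\kappa)$.

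The main point requiring care is that assumption $(a)$ gives only an $L^{\infty}$ bound on the initial data $u_{i}$ — not the $C^{0}$-convergence $u_{i}\to u$ used in Main Theorem \ref{main}, and no uniform $C^{\alpha}$ control at time $0$. The scheme still works because this $L^{\infty}$ bound is precisely what the maximum principle propagates along the unnormalized flow and precisely what the Krylov--Safonov estimate needs, while the passage to $t=0$ rests only on the $C^{0}$-continuity of the flow. One should also check that the mean-value term $r(g_{i}(t))$, present in the normalized flow of Main Theorem \ref{main} but absent here, plays no role; indeed its absence only simplifies the estimates — the lower bound $\min_{M}u_{i}(\cdot,t)\ge C_{0}^{-1}$ now needs no exponential correction — and the dependence of $T$ on $\kappa$ and $u$ is exactly the one recorded in the volume estimates lemma above.
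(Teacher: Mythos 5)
Your route is the one the paper itself intends: its entire justification for this lemma is the single sentence preceding it, deferring to the Yamabe nonpositive case of Section 2, and you have correctly filled in the two interior ingredients. The maximum-principle propagation of the two-sided bound (monotonicity of $\min_M u_i(\cdot,t)$ when $R(g_0)\le 0$, and the Gronwall bound on $\max_M u_i(\cdot,t)$ using the lower bound already obtained), and the rewriting of the flow as $\partial_t u_i=a_i\Delta_{g_0}u_i+c_iu_i$ with $a_i=(n-1)u_i^{-4/(n-2)}$ uniformly elliptic, so that Krylov--Safonov \cite{krylov1981certain} gives $\|u_i\|_{C^{\alpha/2,\alpha}(M\times[t_0,T])}\le C(t_0)$ for every $t_0>0$, are both sound and are exactly what the paper's one-line proof presupposes.

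The genuine gap is the final step, which you identify and then dismiss: the passage to $t=0$ does \emph{not} ``rest only on the $C^0$-continuity of the flow.'' The interior Krylov--Safonov constant on $M\times[t_0,T]$ degenerates as $t_0\to 0$ (the estimate at time $t_0$ only sees parabolic cylinders of radius at most $\sqrt{t_0}$), and a H\"older bound up to the initial time requires a uniform H\"older modulus for the initial data. Assumptions $(a)$--$(c)$ give only $C_0^{-1}\le u_i\le C_0$ together with a uniform $H^1(M,g_0)$ bound (via $(c)$ and the identity $\int_M R(g_i)\,d\mathrm{vol}_{g_i}=\frac{4(n-1)}{n-2}\int_M(|\nabla u_i|^2+\tfrac{n-2}{4(n-1)}R(g_0)u_i^2)\,d\mathrm{vol}_{g_0}$), neither of which controls $[u_i]_{C^{0,\alpha}}$. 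Indeed the second displayed inequality fails at $t_1=t_2=0$ under the hypotheses of Main Theorem \ref{main2}: take $u_i=u+\phi(\lambda_i\exp_{x_0}^{-1}(\cdot))$ with $\phi$ a fixed bump of height $\tfrac12$ supported in the unit ball and $\lambda_i\to\infty$; then $(a)$ holds, $(b)$ holds since $\int_M|u_i-u|\lesssim\lambda_i^{-n}$, and $(c)$ holds since the added Dirichlet energy is $O(\lambda_i^{2-n})$ with $n\ge3$, yet $[u_i]_{C^{0,\alpha}}\gtrsim\lambda_i^{\alpha}\to\infty$. So the lemma can only be asserted on $[t_0,T]$ with $C=C(t_0)$, and the subsequent use of the estimate at $t=0$ in the proof of Lemma \ref{lemm-key} would have to be rerun at a small positive time and combined with Lemma \ref{lemm-esti} (or some uniform-in-$i$ continuity at $t=0$ must be established by other means). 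To be fair, this defect is inherited from the paper rather than introduced by you, but the sentence claiming the $L^\infty$ bound suffices for the estimate down to $t=0$ is precisely the step that would fail.
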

Combining these, we can show that $u_{i}$ actually converges to $u$ $C^{0}$-uniformly on $M$ as $i \rightarrow \infty.$
\begin{lemm}
\label{lemm-key}
$u_{i} \rightarrow u$ uniformly on $M$ as $i \rightarrow \infty.$
\end{lemm}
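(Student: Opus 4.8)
The plan is as follows. The uniform H\"older estimate established above already makes the family $\{u_{i}\}$ equicontinuous on $M \times [0,T]$; what is still missing is a mechanism forcing its time-$0$ limit points to coincide with $u$, and this is supplied by the $L^{1}$-estimate \eqref{eq-L1} together with hypothesis $(b)$. First I would apply Lemma \ref{lemm-esti} with the constant weight $\psi \equiv 1$. Since $\Delta_{g_{0}}\psi \equiv 0$ we have $C[\psi] = 0$, so \eqref{eq-L1} collapses to
\[
\left( \int_{M} |u(t) - u_{i}(t)|\, d\mathrm{vol}_{g_{0}} \right)^{\frac{4}{n+2}} \le \left( \int_{M} |u - u_{i}|\, d\mathrm{vol}_{g_{0}} \right)^{\frac{4}{n+2}} + \frac{n+2}{4}\, \mathrm{Vol}(M, g_{0})^{\frac{4}{n+2}}\, t .
\]
By $(b)$ the first term on the right tends to $0$ as $i \to \infty$, so for each fixed $t \in (0,T]$ one obtains $\limsup_{i \to \infty} \int_{M} |u(t) - u_{i}(t)|\, d\mathrm{vol}_{g_{0}} \le \eta(t)$, where $\eta(t) := \left( \frac{n+2}{4} \right)^{\frac{n+2}{4}} \mathrm{Vol}(M, g_{0})\, t^{\frac{n+2}{4}}$ satisfies $\eta(t) \to 0$ as $t \to 0^{+}$. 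In other words, for small positive times $u_{i}(\cdot, t)$ is $L^{1}$-close to $u(\cdot, t)$, uniformly in $i$ in the $\limsup$ sense.

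Next I would upgrade this $L^{1}$-closeness to a sup-bound by means of the H\"older estimate. Fix such a $t$ and set $w_{i} := u_{i}(\cdot, t) - u(\cdot, t)$, which is $\alpha$-H\"older on $(M, g_{0})$ with constant $2C$ independent of $i$. If $\| w_{i} \|_{C^{0}(M)} =: m_{i}$ is attained at $p \in M$, then $|w_{i}| \ge m_{i}/2$ on the $g_{0}$-ball about $p$ of radius $(m_{i}/(4C))^{1/\alpha}$; since in the compact manifold $(M, g_{0})$ metric balls of small radius $\rho$ have volume at least $c_{0}\rho^{n}$, this forces $\int_{M} |w_{i}|\, d\mathrm{vol}_{g_{0}} \ge c_{1}\, m_{i}^{\,1+n/\alpha}$ as soon as $m_{i}$ is below a fixed threshold, i.e. $\| w_{i} \|_{C^{0}(M)} \le C' \big( \int_{M} |w_{i}|\, d\mathrm{vol}_{g_{0}} \big)^{\alpha/(n+\alpha)}$ once $\| w_{i} \|_{L^{1}}$ is small. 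Combined with the previous step this yields, for all sufficiently small $t > 0$,
\[
\limsup_{i \to \infty} \| u_{i}(\cdot, t) - u(\cdot, t) \|_{C^{0}(M)} \le C'\, \eta(t)^{\alpha/(n+\alpha)} =: \eta'(t), \qquad \eta'(t) \to 0 \ \text{ as } t \to 0^{+}.
\]
Finally I would remove the auxiliary time: the time part of the H\"older estimate gives $\| u_{i}(\cdot, 0) - u_{i}(\cdot, t) \|_{C^{0}(M)} \le C\, t^{\alpha/2}$ for every $i$, and likewise $\| u(\cdot, 0) - u(\cdot, t) \|_{C^{0}(M)} \le C\, t^{\alpha/2}$; hence $\limsup_{i \to \infty} \| u_{i} - u \|_{C^{0}(M)} \le 2C\, t^{\alpha/2} + \eta'(t)$ for every small $t > 0$, and letting $t \to 0^{+}$ shows $u_{i} \to u$ uniformly on $M$.

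The step I expect to be the main obstacle is the passage from $L^{1}$-smallness to $C^{0}$-smallness \emph{with constants uniform in $i$}: this relies on the H\"older estimate above being valid up to and including $t = 0$, which in turn requires propagating the two-sided bound $C_{0}^{-1} \le u_{i} \le C_{0}$ of assumption $(a)$ along the unnormalized Yamabe flow on $[0, T]$ (maximum principle, exactly as in the Yamabe-nonpositive case of Section 2) and then running the Krylov--Safonov and interior Schauder estimates uniformly in $i$. Granting that, Steps 1 and 3 are routine. Alternatively one could argue by Arzela--Ascoli: the H\"older bound makes $\{u_{i}\}$ precompact in $C^{0}(M \times [0,T])$, any uniform sublimit $w$ satisfies $w(\cdot, 0) = u$ by $(b)$ since uniform convergence implies $L^{1}$-convergence, and therefore the whole sequence $u_{i} = u_{i}(\cdot, 0)$ converges uniformly to $u$; I prefer the route above as it reuses Lemma \ref{lemm-esti} directly.
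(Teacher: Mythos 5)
Your proposal is correct, and it rests on exactly the same two ingredients as the paper's proof: the $L^{1}$-estimate of Lemma \ref{lemm-esti} and the uniform $C^{\alpha}$-bound, combined through the standard ``equicontinuity plus $L^{1}$-smallness forces $C^{0}$-smallness'' mechanism via the lower volume bound $\mathrm{Vol}(B_{\rho}) \ge c_{0}\rho^{n}$ (your exponent $1 + n/\alpha$ is precisely the one appearing in the paper). The organization differs, though. The paper argues by contradiction directly at $t=0$: assuming $|u(x_{i}) - u_{i}(x_{i})| \ge \varepsilon$, it inserts a bump function $\psi$ supported near $x_{i}$ into \eqref{eq-L1} and plays the H\"older lower bound for $\int \psi |u - u_{i}|$ against the $L^{1}$-smallness from $(b)$; note that at $t=0$ the inequality \eqref{eq-L1} is an identity, so the time-evolution content of Lemma \ref{lemm-esti} is not actually used there. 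You, by contrast, genuinely exploit that evolution: taking $\psi \equiv 1$ (so $C[\psi]=0$) you control $\|u_{i}(\cdot,t) - u(\cdot,t)\|_{L^{1}}$ at positive times, upgrade to $C^{0}$ there, and return to $t=0$ via the temporal H\"older bound. Your route is slightly longer but more robust: it would survive even if the spatial H\"older estimate were only available for $t>0$, needing only $C^{0}$-continuity in time down to $t=0$; the paper's route is shorter but leans on the spatial $C^{\alpha}$-estimate holding at $t=0$ itself. Your flagged concern --- that the H\"older estimate must be uniform in $i$ up to $t=0$, which requires propagating the two-sided bound of $(a)$ and then Krylov--Safonov --- is exactly how the paper sets things up (via the lemma preceding Lemma \ref{lemm-key}), so there is no gap. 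Your Arzel\`a--Ascoli remark at the end is also a valid, and arguably cleaner, shortcut.
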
 
\begin{proof}
We prove this lemma by contradiction.
Suppose that there is $\varepsilon > 0$ and
a sequence $\{ x_{i} \}$ of points in $M$ such that
\[
|u(x_{i}) - u_{i}(x_{i})| \ge \varepsilon.
\]
Fix a sufficiently small positive constant $0 < r_{0} << \min \{ \mathrm{inj}(M, g_{0}), \varepsilon \},$ where $\mathrm{inj}(M, g_{0})$ is the injectivity radius of $(M, g_{0}).$
In particular, 
\[
|u(x_{i}) - u_{i}(x_{i})| \ge \varepsilon > r_{0} > 0.
\]
We will denote some positive constants depending only on $M, g_{0}, u, \kappa$ and $C_{0}$ as the same symbol $C.$
We take a nonnegative test function $\psi \in C^{\infty}(M)$ such that 
\begin{itemize}
\item $\psi$ has compact support contained in the interior of 
\[
\{ x \in M |~d_{g_{0}}(x_{i}, x) \le r_{0} \},
\]
\item $0 \le \psi \le 1$ on $M.$
\end{itemize}
From the above $C^{\alpha}$-estimate and the volume comparison,
the left-hand side of (\ref{eq-L1}) (in the case of $t = 0$) in Lemma \ref{lemm-esti} is bounded from below by
\[
C r_{0}^{\left( 1 + \frac{n}{\alpha} \right) \frac{4}{n+2}}.
\]
On the other hand, the right-hand side of (\ref{eq-L1}) ($t = 0$) is bounded from above by 
\[
\left( \int_{M} |u(0) - u_{i}(0)|\, d\mathrm{vol}_{g_{0}} \right)^{\frac{4}{n+2}}.
\]
However, from the assumption $(b),$ we can take $i$ sufficiently large such that
\[
\left( \int_{M} |u(0) - u_{i}(0)|\, d\mathrm{vol}_{g_{0}} \right)^{\frac{4}{n+2}} < \frac{C}{2} r_{0}^{\left( 1 + \frac{n}{\alpha} \right) \frac{4}{n+2}}.
\]
This is a contradiction.
\end{proof}

\begin{proof}[Proof of Main Theorem \ref{main2}]
Along the unnormalized Yamabe flow $(g(t))_{t \in [0,T)},$ the scalar curvature evolves as 
\[
\frac{\partial}{\partial t} R(g(t)) = (n-1) \Delta_{g(t)} R(g(t)) + R(g(t))^{2}.
\]
Thus, we obtain that for all $t \in (0,T),$
\[
\begin{split}
\frac{d}{dt} &\left( \int_{M} R(g(t))\, d\mathrm{vol}_{g(t)} \right) \\
&= \int_{M} \left( (n-1)\Delta_{g(t)} R(g(t)) + R(g(t))^{2} - \frac{n}{2} R(g(t))^{2} \right)\, d\mathrm{vol}_{g(t)} \\
&= -\frac{n-2}{2}\int_{M} R(g(t))^{2}\, d\mathrm{vol}_{g(t)} \\
&\le 0.
\end{split}
\]
Hence the function $t \mapsto \int_{M} R(g(t))\, d\mathrm{vol}_{g(t)}$ is decreasing for all $t \in [0,T]$ because $R(g(t)) \overset{C^{0}}{\longrightarrow} R(g(0))$ (see Remark \ref{rema-prop2.1}).
Therefore, from Lemma \ref{lemm-key}, we can follow the same argument in the proof of Main Theorem \ref{main} and prove Main Theorem \ref{main2} in the same manner.
\end{proof}

\begin{ques}
We have the following questions. 
\begin{itemize}
\item On an open manifold, is there example for which Main Theorems does not hold?
\item Is the condition ``$\int_{M} R(g)\, dm \le \kappa$'' for Riemannian manifolds with smooth measures $(M, g, dm := e^{-f} d\mathrm{vol})$ $C^{0}$-closed
in the intersection of a conformal class and the space of all Riemannain metrics with measures?  (cf. Main Theorem 2 of the author's preprint, arXiv:2208.01865v14)
\end{itemize}
\end{ques}

\section{Appendix}
\label{appendix}
We used the following fact in the proof of Main Theorem \ref{main} (in the Yamabe non-positive case).
\begin{prop}[Gronwall's inequality]
\label{prop-app}
Assume that there is a continuous function $\alpha(t)$ and a nonnegative continuous function $\beta(t)$ on $[0, T]$ such that a continuous function $u(t)$ satisfies the following inequality:
\[
u(t) \le \alpha(t) + \int_{0}^{t} \beta(s) u(s)\, ds~~~~\mathrm{for~all}~t \in [0, T].
\]
Then, for all $t \in [0, T],$
\[
u(t) \le \alpha(t) + \int_{0}^{t} \alpha(s) \beta(s) \exp \left( \int_{s}^{t} \beta(r)\, dr \right)\, ds.
\]
\end{prop}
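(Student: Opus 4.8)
The plan is to use the standard integrating-factor argument. First I would introduce the auxiliary function $v(t) := \int_{0}^{t} \beta(s) u(s)\, ds$, which is well-defined and continuously differentiable on $[0,T]$ since $\beta u$ is continuous, and which satisfies $v(0) = 0$. By the fundamental theorem of calculus together with the hypothesis $u(t) \le \alpha(t) + v(t)$, one gets $v'(t) = \beta(t) u(t) \le \beta(t)\left( \alpha(t) + v(t) \right)$, i.e.\ $v'(t) - \beta(t) v(t) \le \alpha(t)\beta(t)$ for all $t \in [0,T]$.

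Next I would multiply this differential inequality by the positive integrating factor $E(t) := \exp\left( -\int_{0}^{t} \beta(r)\, dr \right)$. Because $E'(t) = -\beta(t) E(t)$, the left-hand side becomes an exact derivative, $\frac{d}{dt}\left( E(t) v(t) \right) = E(t)\left( v'(t) - \beta(t) v(t) \right) \le \alpha(t)\beta(t) E(t)$. Integrating from $0$ to $t$ and using $E(0) = 1$, $v(0) = 0$ yields $E(t) v(t) \le \int_{0}^{t} \alpha(s)\beta(s) E(s)\, ds$. Dividing by $E(t) > 0$ and noting that $E(s)/E(t) = \exp\left( \int_{s}^{t} \beta(r)\, dr \right)$ gives $v(t) \le \int_{0}^{t} \alpha(s)\beta(s) \exp\left( \int_{s}^{t} \beta(r)\, dr \right)\, ds$. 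Finally, substituting this estimate back into $u(t) \le \alpha(t) + v(t)$ produces exactly the claimed bound.

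Since this is a textbook fact, there is no serious obstacle; the only points that deserve a line of justification are the differentiability of $v$ (which is immediate from continuity of the integrand) and the nonnegativity of $\beta$, which is what guarantees that $E$ is the correct monotone integrating factor and that multiplying the inequality by $E(t) > 0$ preserves its direction. I would also remark that this is the form of Gronwall's inequality actually used in the Yamabe nonpositive case, where $\alpha$ is affine in $t$ and $\beta$ is the constant $\beta$ defined there.
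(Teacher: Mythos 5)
Your proposal is correct and is essentially the same argument as the paper's: the paper's auxiliary function $v(s)$ is precisely the product $\exp\left(-\int_{0}^{s}\beta(r)\,dr\right)\int_{0}^{s}\beta(r)u(r)\,dr$ that you form when you multiply your $v$ by the integrating factor $E$, and both proofs then integrate the resulting exact-derivative inequality and substitute back. (The paper's displayed definition of $v$ omits the factor $u(r)$ in the inner integral, evidently a typo, since the derivative computed immediately afterwards matches the corrected definition — your write-up avoids this slip.)
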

\begin{proof}
Set 
\[
v(s) := \exp \left( -\int_{0}^{s} \beta(r)\, dr \right) \cdot \int_{0}^{s} \beta(r) u(r)\, dr
\]
for $s \in [0, T].$
Then, we can easily check that
\[
\frac{d}{ds} v(s) = \left( u(s) - \int_{0}^{s} \beta(r) u(r)\,dr \right) \beta(s) \exp \left( -\int_{0}^{s} \beta(r)\, dr \right),~~~s \in [0,T].
\]
Since $v(0) = 0,$ using the assumption and integrating the previous inequality, we obtain that
\[
v(t) \le \int_{0}^{t} \alpha(s) \beta(s) \exp \left( -\int_{0}^{s} \beta(r)\, dr \right)\, ds
\]
for all $t \in [0, T].$
Thus, from the definition of $v(t),$
\[
\begin{split}
\int_{0}^{t} \beta(s) u(s)\, ds &= \exp \left( \int_{0}^{t} \beta(r)\, dr \right) v(t) \\
&\le \int_{0}^{t} \alpha(s) \beta(s) \exp \left( \int_{0}^{t} \beta(r)\, dr - \int_{0}^{s} \beta(r)\, dr \right)\, ds \\
&= \int_{0}^{t} \alpha(s) \beta(s) \exp \left( \int_{s}^{t} \beta(r)\, dr \right)\, ds.
\end{split}
\]
Substituting this into the assumed inequality, we can obtain the desired assertion. 
\end{proof}

\bigskip
\noindent
\textit{E-mail adress}:~hamanaka1311558@gmail.com

\smallskip
\noindent
\textsc{Department of Mathematics, Graduate School of Science, Osaka University, Toyonaka, Osaka 560-0043, Japan}

\end{document}